\documentclass[12pt]{article}

\usepackage[margin=1.05in]{geometry}
\usepackage{amsmath,amssymb,amsthm}
\usepackage{mathrsfs}
\usepackage{graphicx}
\usepackage{url}
\usepackage{hyperref}

\newtheorem{theorem}{Theorem}[section]
\newtheorem{proposition}[theorem]{Proposition}
\newtheorem{lemma}[theorem]{Lemma}
\newtheorem{corollary}[theorem]{Corollary}
\newtheorem{conjecture}[theorem]{Conjecture}

\theoremstyle{definition}

\theoremstyle{remark}
\newtheorem{remark}[theorem]{Remark}

\newcommand{\R}{\mathbb R}
\newcommand{\Z}{\mathbb Z}
\newcommand{\E}{\mathbb E}
\newcommand{\Var}{\operatorname{Var}}
\newcommand{\Cov}{\operatorname{Cov}}
\newcommand{\Tr}{\operatorname{Tr}}
\newcommand{\Id}{\operatorname{Id}}
\newcommand{\HS}{\mathrm{HS}}

\newcommand{\dd}{\,\mathrm d}
\newcommand{\ip}[2]{\left\langle #1,#2\right\rangle}
\newcommand{\norm}[1]{\left\lVert #1\right\rVert}

\newcommand{\cL}{\mathcal L}

\newcommand{\tauM}{\tau_M}
\newcommand{\tauL}{\tau_L}

\hypersetup{
  pdftitle={Optimal MM* bounds for convex bodies},
  pdfauthor={Pierre Bizeul},
}

\title{Optimal MM* bounds for convex bodies}

\author{Pierre Bizeul}

\date{}

\begin{document}

\maketitle

\begin{abstract}
Let $K\subset\R^n$ be a convex body in isotropic position. We prove the
optimal mean-width estimate
\[
  M^*(K)\leq C\sqrt{n\log n}.
\]
The main new ingredient is a geometric inequality relating the Gaussian
mean of the support function to its mean under the uniform measure on
$K$, obtained through a heat-flow argument. Combined with the
Gaussian-log-concave comparison of Eldan and Lehec and the newly available
dimension-free bound on the third-moment parameter $\kappa_n$, this
yields the result. The boundedness of $\kappa_n$ also makes the mean-norm
estimate of Bizeul and Klartag sharp. Combining both estimates yields
\[
  M(K)M^*(K)\leq C\log n,
\]
extending Pisier's $MM^*$ estimate to non-symmetric convex bodies and to the isotropic position.
\end{abstract}

\section{Introduction}

Throughout the paper, we work in $\R^n$ with $n\geq2$. A convex body
$K\subset\R^n$ is a compact convex subset with non-empty interior. We
shall denote by $X_K$ the random vector uniformly distributed in $K$.
More generally, we say that a random vector $X$ is log-concave if it
has a density of the form $e^{-V}$, with
$V:\R^n\longrightarrow\R\cup\{+\infty\}$ a convex function. We say
that a random vector $X$ is isotropic if it is centered and its
covariance matrix is the identity. A convex body $K$ will be called
isotropic if the random vector $X_K$ is. Namely,
\[
  \int_K x\dd x=0,
  \qquad
  \frac1{|K|}\int_K x\otimes x\dd x=\Id,
\]
where $|\cdot|$ denotes Lebesgue volume. Note that this convention,
which we retain throughout the paper, is sometimes called
probabilistic isotropy and is slightly different from volume
isotropy, which requires the volume of $K$ to be one and its covariance
matrix to be scalar. Any convex body can be made isotropic after
applying a suitable invertible affine transformation. In the
terminology of convex geometry, an invertible affine image of a convex
body is sometimes called a ``position''.

The support function of $K$ is defined by
\[
  h_K(x)=\sup_{y\in K}\langle x,y\rangle,
  \qquad x\in\R^n.
\]
If $K$ contains the origin in its interior, we define its gauge by
\[
  \norm{x}_K
  =\inf\left\{t>0:\ \frac{x}{t}\in K\right\}.
\]
Under the same assumption, the polar body of $K$ is defined as the
body whose gauge is $h_K$, namely
\[
  K^\circ
  =
  \left\{
    x\in\R^n:\
    \sup_{y\in K}\langle x,y\rangle\leq1
  \right\}.
\]
Note that $\norm{\cdot}_K$ and $h_K$ define genuine norms only when
$K$ is origin-symmetric.

The mean-norm and mean-width are two fundamental geometric parameters
associated with a convex body $K$, defined respectively by
\[
  M(K)=\E\norm{\Theta}_K,
  \qquad
  M^*(K)=\E h_K(\Theta),
\]
where $\Theta$ is distributed uniformly on the sphere
$\mathbb S^{n-1}$. These two quantities are dual to one another since,
tautologically,
\[
  M(K^\circ)=M^*(K).
\]

It is not too difficult to provide lower bounds for $M$ and $M^*$.
Indeed, for a fixed volume, Urysohn's inequality states that the
mean-width is minimized by the Euclidean ball:
\begin{equation}\label{eq_157}
  M^*(K)
  \geq
  \left(\frac{|K|}{|B_2^n|}\right)^{1/n} := \operatorname{vrad}(K).
\end{equation}
Here and below, $B_p^n$ denotes the unit ball of $\ell_p^n$.
Furthermore, when $K$ contains the origin in its interior, expressing
its volume in polar coordinates and using Jensen's inequality gives
the corresponding estimate
\begin{equation}\label{eq_166}
  M(K)
  \geq
  \left(\frac{|B_2^n|}{|K|}\right)^{1/n}.
\end{equation}
In particular, the scale-invariant quantity
\[
  \ell(K):=M(K)M^*(K)
\]
satisfies
\[
  \ell(K)\geq1.
\]
The
question of upper-bounding these quantities in an appropriate position
is central to convex geometry.

The main result of this paper is the following optimal estimate on the
mean-width in isotropic position.

\begin{theorem}\label{thm_186}
Let $K$ be an isotropic convex body in $\R^n$. Then
\begin{equation}\label{eq_188}
  M^*(K)\leq C\sqrt{n\log n},
\end{equation}
where $C>0$ is a universal constant.
\end{theorem}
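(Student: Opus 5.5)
Since $\E h_K(G)=c_n\sqrt n\,M^*(K)$ with $c_n\to1$ for a standard Gaussian $G$, the statement reduces to showing $\E h_K(G)\leq Cn\sqrt{\log n}$. The plan follows the abstract: compare the Gaussian mean of $h_K$ with its mean under the uniform measure on $K$, and then bound the latter using the Eldan--Lehec Gaussian/log-concave comparison together with the bound $\kappa_n\leq C$.

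\textbf{Step 1 (heat flow).} Let $X=X_K$ and run the Ornstein--Uhlenbeck semigroup $P_t$ from the law of $X$ (isotropic) towards $\gamma_n$. I track $\phi(t)=\E h_K(e^{-t}X+\sqrt{1-e^{-2t}}G)$. Differentiating gives
\[
  \phi'(t)=\E\bigl[-\langle x,\nabla h_K\rangle+\Delta h_K\bigr]
\]
in the distributional sense. Here $\langle x,\nabla h_K\rangle=h_K$ by $1$-homogeneity. The term $\Delta h_K$ is a nonnegative measure; it is the density of the surface-area-type measure of $K$, scaled by $1/|x|$. Integrating in $t$ should give
\[
  \E h_K(G)\leq C\,\E h_K(X)+\text{(error)}.
\]
The error is controlled through $\E\Delta h_K$ along the flow, which can be bounded via isotropy and Gaussian mixing. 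The target inequality is
\[
  \E h_K(G)\;\leq\; C\bigl(\E h_K(X)+\sqrt n\bigr),
\]
possibly with $X$ replaced by a Gaussian-smoothed version of $X$. Equivalently, one could use the representation $h_K(x)=\lim_{p\to\infty}\frac1p\log\E e^{p\langle x,X\rangle}$, or a smoothed proxy, and run the heat-flow comparison on that.

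\textbf{Step 2 (log-concave to Gaussian).} Next I bound $\E h_K(X)$, a mean of suprema over a log-concave measure. I would use the Eldan--Lehec comparison: for an isotropic log-concave $X$, write $X$ through the stochastic localization martingale $X=\int_0^\infty A_t\,dB_t$. Then apply Talagrand/Sudakov-type comparison to the convex functional $h_K$ to get
\[
  \E h_K(X)\lesssim \E h_K(G)\cdot\sqrt{\log n}\cdot(\text{factor controlled by }\kappa_n).
\]
The growth of $\|A_t\|_{op}$ is governed by $\kappa_n$, so the bound $\kappa_n\leq C$ removes any further loss and leaves the $\sqrt{\log n}$. As it stands, combining Steps 1 and 2 is circular: Step 2 bounds $\E h_K(X)$ by $\E h_K(G)$ and Step 1 bounds the reverse. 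The fix is to apply Step 2 not to $h_K$ but to $\E h_K(X)=\E\sup_{y\in K}\langle X,y\rangle$ viewed as a supremum of a process indexed by $K$ with log-concave inputs. One then bounds it directly by $\sqrt{\log n}$ times the Gaussian width of the covariance structure; by isotropy that width is $\sqrt n\cdot\E|X|\sim n$.

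\textbf{Step 3 (assembly).} The directly computable quantity is
\[
  \E_{X,X'}\sup\langle X,X'\rangle\leq \E|X|\,\operatorname{diam}\lesssim n^{3/2},
\]
which is too weak. So the real input must be the localized version: conditioning on the localization at time $t\sim1$ makes the measure $\gamma$-uniformly log-concave, and the Gaussian part then contributes $\sqrt{n\log n}\cdot\sqrt n$. The main obstacle is Step 1: obtaining the heat-flow inequality with only a constant loss, in particular controlling the $\Delta h_K$ term, which concentrates on $\partial K$. I would first attempt it by averaging $h_K$ along the OU flow and applying Jensen together with the isotropic bound $\E|X|^2=n$.
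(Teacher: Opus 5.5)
\textbf{There is a genuine gap: as written, Steps 1--3 do not prove the theorem.}

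\emph{Why the fix fails.} The linear comparison you target in Step 1, $\E h_K(G)\le C(\E h_K(X_K)+\sqrt n)$, is in fact true for isotropic $K$. But, as you observe, it is circular against the Eldan--Lehec bound $\E h_K(X_K)\le C\sqrt{\log n}\,\E h_K(G)$. Your proposed fix is a direct bound $\E h_K(X_K)\lesssim n\sqrt{\log n}$, and this bound is false. Take the isotropic cross-polytope $P_n=rB_1^n$ with $r=\sqrt{(n+1)(n+2)/2}\simeq n$. Then $h_{P_n}=r\norm{\cdot}_\infty$, and the coordinates of $X_{P_n}$ behave like i.i.d.\ symmetric exponentials of unit variance. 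Hence $\E\norm{X_{P_n}}_\infty\simeq\log n$ and $\E h_{P_n}(X_{P_n})\simeq n\log n$. Step 3 (localizing at time $t\sim1$) cannot repair this, because the bound it aims for is itself false; as written it is a heuristic, not an argument.

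\emph{The missing idea.} The Gaussian/uniform comparison must be \emph{quadratic}. Write $W_G=\E h_K(G)$ and $W_K=\E h_K(X_K)$. The paper proves $W_G^2\le 2nW_K$ for every centered $K$, as follows.
\begin{itemize}
\item Set $F(s)=\E h_K(X_K+B_s)$. The divergence theorem gives $F'(s)=\frac1{2|K|}\int_{\partial K}\ip{\nabla P_sh_K}{n_K}\dd S$.
\item Since $\nabla P_sh_K=P_s\nabla h_K$ takes values in $K$, the integrand is at most $h_K(n_K(x))=\ip{x}{n_K(x)}$. A second application of the divergence theorem then yields $F'\le n/2$. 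This is exactly the control of the $\Delta h_K$ term that you identified as the main obstacle.
\item Jensen's inequality, using only that $K$ is centered, gives $F(s)\ge\sqrt s\,W_G$. Hence $\sqrt s\,W_G\le W_K+ns/2$.
\item Choosing $s=2W_K/n$ gives the square inequality $W_G^2\le 2nW_K$.
\end{itemize}
The square is what breaks the circularity. Combined with Eldan--Lehec it gives $W_G^2\le Cn\sqrt{\log n}\,W_G$, so $W_G\lesssim n\sqrt{\log n}$.

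\emph{How your Step 1 relates.} Your linear target is only the weak consequence $W_G\le\sqrt{2nW_K}\le\sqrt2\,W_K$, which uses $W_K\ge\E|X_K|^2=n$ (because $h_K(x)\ge|x|^2$ on $K$). Your OU flow is the same object up to homogeneity:
\[
\E h_K\bigl(e^{-t}X_K+\sqrt{1-e^{-2t}}\,G\bigr)=e^{-t}F(e^{2t}-1).
\]
So Step 1 can be salvaged, but only by optimizing over the stopping time, namely stopping at $e^{2t}-1=2W_K/n$, rather than running the flow to equilibrium.
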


Theorem~\ref{thm_186} is optimal, up to the value of the
constant $C$, as follows from the example of the cross-polytope. Before
the present work, the best known estimate in isotropic position was due
to Emanuel Milman~\cite{Milman}:
\[
  M^*(K)\lesssim\sqrt n\,\log^2 n.
\]
Earlier estimates were obtained in
\cite{HartzoulakiThesis,PivovarovMeanWidth}; as observed in
\cite{BrazitikosGiannopoulosValettasVritsiou}, they may also be deduced
from Paouris's work~\cite{PaourisConcentration}.

After the first version of this paper appeared, Brazitikos and Pandis~\cite{BrazitikosPandis} obtained,
for origin-symmetric isotropic bodies, the intermediate estimates
\[
  M(K)\lesssim\frac{\log n}{\sqrt n},
  \qquad
  M^*(K)\lesssim\sqrt n\,\log n,
\]
using different methods, based on centroid bodies.

Finally, in independent and simultaneous work, Paouris and
Pathak~\cite{PaourisPathak} proved the sharp affine estimate
\[
  \inf_{T\in GL_n}
  \frac{M^*(TK)}{\operatorname{vrad}(TK)}
  \lesssim \sqrt{\log n}.
\]
However, their proof does not use the isotropic position, but rather assumes that $K^\circ$ is in Bobkov position \cite{BobkovMPosition} : namely the gaussian measure restricted to $K^\circ$ is assumed to have scalar covariance. In particular, it does not imply the existence of a position where both $M$ and $M^*$ are controlled, i.e it does not yield an $MM^*$ bound.

As explained below, both the result of Bizeul-Klartag \cite{BizeulKlartag} as well as Theorem \ref{thm_186} also implies an affine optimal bound on $M^*$ by applying the $M$-estimate
to the polar body in isotropic position and using the Bourgain–Milman reverse Santal´o inequality. On the other
hand, Theorem \ref{thm_186} gives the same affine conclusion directly from the isotropic position
once the slicing theorem is used. Both of these arguments rely
on the newly available dimension-free estimate for $\kappa_n$ (see below for a discussion on the role of this parameter). Remarkably, the Paouris–Pathak
argument does not require any such bound.

The proof of Theorem~\ref{thm_186} combines two ingredients.
The first is the following optimal comparison between Gaussian and
log-concave vectors in the gauge order.

\begin{theorem}\label{thm_206}
Let $X$ be an isotropic log-concave random vector in $\R^n$, and let
$\norm{\cdot}$ be a gauge. Then
\begin{equation}\label{eq_209}
  \frac{\E\norm{G}}{C\sqrt{\log n}}
  \leq \E\norm{X}
  \leq C\sqrt{\log n}\,\E\norm{G},
\end{equation}
where $G\sim N(0,\Id)$ is a standard Gaussian vector and $C>0$ is a
universal constant.
\end{theorem}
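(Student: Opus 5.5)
The plan is to follow the stochastic localization / martingale embedding approach of Eldan and Lehec, with the newly available dimension-free bound on the third-moment parameter $\kappa_n$ replacing their polylogarithmic estimate.

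For the upper bound $\E\norm{X}\le C\sqrt{\log n}\,\E\norm{G}$, run Eldan's stochastic localization started from the law of $X$. This represents $X$ as the terminal value
\[
  X=\int_0^\infty \Gamma_t\dd B_t
\]
of a martingale, where $(B_t)$ is a standard Brownian motion and $\Gamma_t=A_t^{1/2}$. Here $A_t$ is the covariance of the tilted measure at time $t$, and $\Gamma_t$ is a random symmetric positive matrix with $A_0=\Id$. The key step is a comparison lemma for gauges: if $Y=\int_0^\infty\Gamma_t\dd B_t$, then
\[
  \E\norm{Y}\lesssim \E\Big[\Big\lVert\int_0^\infty\Gamma_t^2\dd t\Big\rVert_{op}^{1/2}\Big]\,\E\norm{G}.
\]
One way to obtain it is through convexity and a Gaussian decoupling: conditionally on a suitable time change, compare with a Gaussian whose covariance is dominated by $\norm{\int\Gamma_t^2}_{op}\Id$, then use that the gauge is convex and $\E\norm{\sigma G'}$ is monotone in the covariance order. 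Since the gauge need not be symmetric, I would center everything and use Anderson-type monotonicity only through convexity and Jensen, not through symmetry.

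It then suffices to bound the operator norm of $\int_0^\infty A_t\dd t$. Split at a time $t_0\asymp 1$. For $t\ge t_0$, the tilted measure is more log-concave than $N(0,t^{-1}\Id)$, so $A_t\preceq t^{-1}$-type bounds give decay. To make this integrable one needs $A_t\preceq C/t$ only for $t\ge t_0$ and then $A_t\preceq e^{-ct}$-type contraction. Alternatively, cut at $t=\infty$ via $\int_{t_0}^\infty A_t\dd t\preceq$ the covariance of the final point mass contribution, which is bounded by $\log$ factors. The $\sqrt{\log n}$ should appear exactly here: $\int_{t_0}^{T}t^{-1}\dd t$ with $T\sim n$ (beyond which the remaining randomness is negligible) gives $\log n$, whose square root is the claimed loss.

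For $t\le t_0$, the evolution $\dd A_t=\sum_i H_{t,i}\dd B_t^i-A_t^2\dd t$ (with $H$ the third-moment tensor) must be controlled. This is the main obstacle: showing $\norm{A_t}_{op}\le C$ with high probability up to time $t_0$. Here I would use the dimension-free bound on $\kappa_n$, which controls the drift of $\Tr A_t^q$ or of a log-sum-exp potential by $O(\norm{A_t}^{\cdot})$ without $\log n$ losses, giving a constant-time bound.

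The lower bound $\E\norm{G}\le C\sqrt{\log n}\,\E\norm{X}$ comes from a reverse embedding. Write $G$ as $X$ plus an independent-type correction. Concretely, run the same localization and note that $G\overset{d}{=}\int_0^\infty \dd B_t$ restricted to $[0,1]$, scaled. Then decompose
\[
  \int_0^{t_0}\dd B_t=\int_0^{t_0}\Gamma_t^{-1}\Gamma_t\dd B_t ,
\]
and use that $\Gamma_t^{-1}$ is bounded on $[0,t_0]$ by the same $\kappa_n$ control. The point is to express a Gaussian of variance $\asymp 1$ as a conditional expectation, $E[X\mid\mathcal F_{t_0}]$, modulo a bounded linear distortion. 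Jensen then gives $\E\norm{E[X\mid\mathcal F]}\le\E\norm{X}$. The remaining loss is again at most $\sqrt{\log n}$ from the comparison lemma applied in reverse. I expect the $t\le t_0$ control of $A_t$ via $\kappa_n$, together with making the comparison lemma work for non-symmetric gauges, to be the hardest part.
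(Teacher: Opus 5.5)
\emph{There is a genuine gap.} Your high-level route is the one the paper relies on: stochastic localization, with the dimension-free bound on $\kappa_n$ fed into the covariance control. But the step you flag as the main obstacle is false as stated, and it is exactly where the $\sqrt{\log n}$ comes from. A bounded $\kappa_n$ does not give $\norm{A_t}_{\mathrm{op}}\le C$ up to a constant time $t_0\asymp1$. It gives $\frac12\Id\preceq A_t\preceq2\Id$ only for $t\le T_0=c/(\kappa_n^2\log n)\asymp1/\log n$, and this horizon is sharp.

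To see this, take a product of centered exponentials. For $C/\log n\le t\le1$, with high probability some coordinate has its tilt near the critical value, and its conditional variance is then of order $1/t$. So $\norm{A_t}_{\mathrm{op}}$ reaches order $\log n$ around $t\sim1/\log n$. The $\log n$ in $T_0$ comes from turning a trace or log-sum-exp potential into an operator-norm bound, and bounding $\kappa_n$ does not remove it.

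Your bookkeeping hides this because the martingale is mis-normalized. The barycenter satisfies $\dd a_t=A_t\dd W_t$, so the integrand is $A_t$, not $A_t^{1/2}$, and the relevant quantity is $\int_0^\infty A_t^2\dd t$. Since $A_t\preceq t^{-1}\Id$, one has $\int_{T_0}^\infty A_t^2\dd t\preceq T_0^{-1}\Id$. On the good event, $\int_0^{T_0}A_t^2\dd t\preceq4T_0\Id$. After handling the bad event, this gives $\E\norm{X}\lesssim T_0^{-1/2}\E\norm{G}\simeq\sqrt{\log n}\,\E\norm{G}$.

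By contrast, $\int^\infty A_t\dd t$ diverges: for $\mu$ with a density, $A_t\approx t^{-1}\Id$ as $t\to\infty$, with no $e^{-ct}$ decay. So your cutoff at $T\sim n$ has no justification. The outline is also internally inconsistent. If constant-time control held, the correctly normalized computation would give $\E\norm{X}\lesssim\E\norm{G}$ with no logarithmic loss. That fails for the product of exponentials with $\norm{\cdot}_\infty$, where the two sides are of order $\log n$ and $\sqrt{\log n}$.

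The lower bound has the same defect. The Gaussian you can extract from $\E[X\mid\mathcal F_t]$ has variance of order $T_0$, not of order $1$. Otherwise, for the isotropic cube $Q_n$ and its own gauge, you would get $\sqrt{\log n}\simeq\E\norm{G}_{Q_n}\lesssim\E\norm{X_{Q_n}}_{Q_n}\le1$. So the loss $\sqrt{\log n}=T_0^{-1/2}$ comes from the short horizon, not from a reverse comparison lemma.

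Moreover, writing $\int_0^{T_0}\dd B_t=\int_0^{T_0}\Gamma_t^{-1}\Gamma_t\dd B_t$ and invoking boundedness of $\Gamma_t^{-1}$ does not, by itself, justify passing an arbitrary gauge through this identity. Adapted martingale transforms are not bounded uniformly in $n$ for general norms; that is a UMD-type property. What makes the comparison work is the positive-semidefinite lower bound $A_t\succeq\frac12\Id$ on $[0,\tau\wedge T_0]$. One way to exploit it:
\begin{itemize}
\item run the barycenter martingale until $\tau\wedge T_0$ and continue it by $\frac12\dd W_t$;
\item realize a copy of this process as $\frac12W_{T_0}$ plus noise that is conditionally centered given $W$, and apply Jensen;
\item use $\E\norm{a_{\tau\wedge T_0}}\le\E\norm{X}$, and control the event $\{\tau<T_0\}$, which has probability at most $n^{-10}$.
\end{itemize}

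The paper does not redo these arguments. It quotes the Eldan--Lehec and Bizeul--Klartag comparisons, both valid with a factor of order $T_0^{-1/2}$, and inserts $\kappa_n^2\le16$ into $T_0$. Replacing $t_0\asymp1$ by $T_0\asymp1/\log n$ throughout, and fixing the normalization, turns your outline into that argument.
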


The left-hand side of this inequality was established by Bizeul and
Klartag~\cite{BizeulKlartag}, while the right-hand side was proved by
Eldan and Lehec~\cite{EldanLehec}. In both cases, the original
statements involved an additional parameter $\kappa_n$, related to
the third-order moment tensor of isotropic log-concave vectors.
Therefore, the statement given above depends on a dimension-free
estimate for $\kappa_n$, which is now available~\cite{Letwin}. We
postpone the discussion of this parameter and of its dimension-free
estimate until the end of the introduction.

The second ingredient is the following new inequality.

\begin{theorem}\label{thm_230}
Let $K$ be a centered convex body in $\R^n$. Then
\begin{equation}\label{eq_232}
  \bigl(\E h_K(G)\bigr)^2
  \leq
  2n\,\E h_K(X_K),
\end{equation}
where $G\sim N(0,\Id)$.
\end{theorem}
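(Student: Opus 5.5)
The plan is to interpolate between $X_K$ and a Gaussian with the heat flow. We compare $\E h_K(\sqrt t\,G)=\sqrt t\,\E h_K(G)$ with $\E h_K(X_K+\sqrt t\,G)$, bound the latter from above by integrating its time derivative, and then optimize in $t$. Write $a=\E h_K(G)$ and $b=\E h_K(X_K)$, and take $X_K$ and $G$ independent.

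\emph{Lower bound.} Since $K$ is centered, $\E X_K=0$. Conditioning on $G$ and applying Jensen's inequality to the convex function $h_K$ gives
\[
  \E h_K(X_K+\sqrt t\,G)\ \ge\ \E h_K(\sqrt t\,G)=\sqrt t\,a .
\]

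\emph{Upper bound via heat flow.} Set
\[
  u(z)=\E h_K(X_K+z)=\frac1{|K|}\int_K h_K(x+z)\dd x ,
\]
so that $\E h_K(X_K+\sqrt t\,G)=\E u(\sqrt t\,G)$. By the heat equation,
\[
  \frac{d}{dt}\E u(\sqrt t\,G)=\tfrac12\,\E\,\Delta u(\sqrt t\,G).
\]
The key claim is the pointwise bound $\Delta u\le n$. To prove it, apply the divergence theorem on $K$:
\[
  \Delta u(z)=\frac1{|K|}\int_{\partial K}\ip{\nabla h_K(x+z)}{\nu(x)}\dd\sigma(x),
\]
where $\nu$ is the outer unit normal. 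Wherever $h_K$ is differentiable, $\nabla h_K(x+z)\in K$, so the integrand is at most $h_K(\nu(x))$. At a boundary point $x$ with normal $\nu(x)$ we have $h_K(\nu(x))=\ip{x}{\nu(x)}$, and a second application of the divergence theorem gives
\[
  \int_{\partial K}\ip{x}{\nu(x)}\dd\sigma=n|K| .
\]
Hence $\Delta u\le n$.

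This is the step I expect to need the most care, since $h_K$ is only Lipschitz and $\Delta h_K$ is a nonnegative measure. I would first handle the regularity by mollifying: replace $h_K$ by $h_K*\rho_\varepsilon$, whose gradient still lies in $K$ by convexity of $K$, and pass to the limit at the end. Alternatively, one can read $\Delta u\le n$ as an inequality between distributions and integrate it against the Gaussian density, which is legitimate because the flux computation holds distributionally. No sign issue arises from the origin: since $K$ is centered, its centroid $0$ lies in the interior, so $h_K\ge0$ and $b>0$.

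\emph{Conclusion.} Integrating the derivative bound from $0$ to $t$ gives
\[
  \E h_K(X_K+\sqrt t\,G)\le b+\frac{nt}{2}.
\]
Combined with the lower bound, this yields $\sqrt t\,a\le b+nt/2$ for every $t>0$, that is,
\[
  a\le \frac{b}{\sqrt t}+\frac{n\sqrt t}{2}.
\]
Choosing $t=2b/n$ gives $a\le\sqrt{2nb}$, i.e.\ $(\E h_K(G))^2\le 2n\,\E h_K(X_K)$, which is \eqref{eq_232}.
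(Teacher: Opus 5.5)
Your proposal is correct and is essentially the paper's proof. Both arguments use the same heat-flow interpolation $t\mapsto\E h_K(X_K+\sqrt t\,G)$, the same flux bound (via $\nabla h_K\in K$, $h_K(\nu(x))=\ip{x}{\nu(x)}$ and $\int_{\partial K}\ip{x}{\nu}\dd\sigma=n|K|$), the same Jensen lower bound and the same choice $t=2\,\E h_K(X_K)/n$. The only cosmetic difference is the order of operations. You average over $K$ before taking the Laplacian and bound $\Delta u\le n$, while the paper applies the heat semigroup first and bounds the flux of $P_s\nabla h_K$; these coincide by Fubini, and the paper's ordering also takes care of the regularity issue you flag.
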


Theorem~\ref{thm_230} does not require an isotropic normalization
and admits a self-contained heat-flow proof. It is the main new
ingredient in the proof of Theorem~\ref{thm_186}. Taken
together, Theorems~\ref{thm_206} and~\ref{thm_230}
imply Theorem~\ref{thm_186}, as explained in
Section~\ref{sec_940}.

Theorem~\ref{thm_186} complements the corresponding mean-norm
estimate of Bizeul and Klartag, which, combined with the
dimension-free bound on $\kappa_n$, yields the following.

\begin{theorem}[Bizeul-Klartag~\cite{BizeulKlartag}]
\label{thm_252}
Let $K$ be an isotropic convex body in $\R^n$. Then
\begin{equation}\label{eq_254}
  M(K)
  \leq
  C\sqrt{\frac{\log n}{n}},
\end{equation}
where $C>0$ is a universal constant.
\end{theorem}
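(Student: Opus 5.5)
The plan is to apply the left-hand inequality of Theorem~\ref{thm_206} to the gauge of $K$ itself, and then convert the resulting Gaussian average into a spherical average by polar integration.

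First, since $K$ is isotropic, it is centered. The barycenter of a convex body lies in its interior, so the origin is an interior point of $K$. Hence $\norm{\cdot}_K$ is a well-defined gauge: finite, nonnegative, positively homogeneous and convex, though not necessarily symmetric. Theorem~\ref{thm_206} is stated for gauges, so it applies.

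Take $X=X_K$, which is an isotropic log-concave random vector. Since $X_K\in K$ almost surely, we have $\norm{X_K}_K\leq1$ pointwise, and therefore $\E\norm{X_K}_K\leq1$. The left-hand side of \eqref{eq_209} then gives
\[
  \E\norm{G}_K\leq C\sqrt{\log n}\,\E\norm{X_K}_K\leq C\sqrt{\log n}.
\]

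Next, write $G=|G|\,\Theta$, where $|G|$ and $\Theta=G/|G|$ are independent and $\Theta$ is uniform on $\mathbb S^{n-1}$. Positive homogeneity of the gauge yields
\[
  \E\norm{G}_K=\E|G|\cdot\E\norm{\Theta}_K=\E|G|\cdot M(K).
\]
It is standard that $\E|G|\geq c\sqrt n$; for example, $\E|G|=\sqrt2\,\Gamma(\frac{n+1}2)/\Gamma(\frac n2)\geq\sqrt{n-1}$, or one can use Gaussian concentration of $|G|$ around $\sqrt n$. Combining the two displays gives $M(K)\leq C'\sqrt{\log n}/\sqrt n$, which is \eqref{eq_254}.

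There is no real obstacle once Theorem~\ref{thm_206} is granted. All the depth lies in the Bizeul--Klartag comparison together with the dimension-free bound on $\kappa_n$. The only points that need checking are that the origin lies in the interior of $K$, so that the gauge is finite, and that Theorem~\ref{thm_206} holds for non-symmetric gauges, which is exactly how it is stated.
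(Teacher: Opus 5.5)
Your argument is correct and follows the paper's route. The paper obtains Theorem~\ref{thm_252} from the left-hand (Bizeul--Klartag) inequality in \eqref{eq_209}, made dimension-free by $\kappa_n^2\leq16$, applied to $\norm{\cdot}_K$ with the trivial bound $\E\norm{X_K}_K\leq1$ and the Gaussian-to-spherical conversion \eqref{eq_949}; this is the same deduction used for Theorem~\ref{thm_186}, with that trivial bound playing the role of Theorem~\ref{thm_230}.
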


The bound is sharp, as follows from the example of the cube. Prior to \cite{BizeulKlartag}, mean-norm estimates in isotropic position were obtained in 
\cite{GiannopoulosMilman2,GiannopoulosStavrakakisTsolomitisVritsiou, VritsiouRegularEllipsoids}.

As mentionned previously, Theorem~\ref{thm_252} already yields the
sharp affine bounds for $M$ and $M^*$. Indeed, after the
normalization $|K|=|B_2^n|$, it gives an affine position in which
\[
  M(K)\lesssim\sqrt{\log n}.
\]
Applying the same estimate to a polar body placed in isotropic
position, and using the Bourgain--Milman reverse Santal\'o
inequality~\cite{BourgainMilman,ArtsteinAvidanGiannopoulosMilman}, gives
another affine position with bounded bolume ratio in which
\[
  M^*(K)\lesssim\sqrt{\log n}.
\]
Thus the optimal affine estimates for $M$ and $M^*$ already follow from
the Bizeul-Klartag estimate together with the dimension-free bound on
$\kappa_n$. The
only volume estimate required is the elementary fact that an isotropic body $K$ satisfies
$\operatorname{vrad}(K)\lesssim\sqrt n$, which is the "easy" direction of the slicing problem. The solution of the slicing problem (\cite{KlartagLehecSlicing, BizeulSlicing} and the very recent \cite{BrazitikosSlicing}) in fact implies that $\operatorname{vrad}(K)\simeq\sqrt n$. This allows to show that the $M^*$ bound in isotropic position of Theorem \ref{thm_186} yields an affine optimal bound. See the remark after the proof of
Corollary~\ref{cor_319}.

The new point of Theorem~\ref{thm_186} is the
optimal bound for $M^*$ in the isotropic position. Together with
Theorem~\ref{thm_252}, this yields an optimal isotropic $MM^*$ estimate.

\begin{corollary}\label{cor_265}
Let $K$ be an isotropic convex body in $\R^n$. Then
\begin{equation}\label{eq_267}
  \ell(K)=M(K)M^*(K)\leq C\log n.
\end{equation}
\end{corollary}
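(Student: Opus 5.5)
The plan is simply to multiply the two isotropic estimates that are already available. By Theorem~\ref{thm_252}, applied to the isotropic body $K$, we have $M(K)\leq C\sqrt{\log n/n}$. By Theorem~\ref{thm_186}, applied to the same body, we have $M^*(K)\leq C\sqrt{n\log n}$. Both quantities are nonnegative: $M(K)$ because a gauge is nonnegative, and $M^*(K)$ because $K$ is centered, so $0\in K$ and hence $h_K\geq 0$. Therefore
\[
  \ell(K)=M(K)M^*(K)\leq C^2\sqrt{\frac{\log n}{n}}\,\sqrt{n\log n}=C^2\log n ,
\]
which is \eqref{eq_267} with constant $C^2$.

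One side point needs checking. The gauge $\norm{\cdot}_K$, and hence $M(K)$, is defined only when the origin lies in the interior of $K$. This holds here: an isotropic body is centered, and the barycenter of a convex body lies in its interior. So $M(K)$ is well defined.

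There is no real obstacle at the level of the corollary. All the difficulty sits in the two cited theorems. Theorem~\ref{thm_186} in turn rests on Theorem~\ref{thm_230} combined with the upper comparison in Theorem~\ref{thm_206}. Roughly, the idea is that $\E h_K(X_K)\lesssim \sqrt{\log n}\,\E h_K(G)$, so \eqref{eq_232} gives $\E h_K(G)\lesssim \sqrt{n\log n}$. Since $M^*(K)\simeq \E h_K(G)/\sqrt n$, this yields $M^*(K)\lesssim\sqrt{\log n}$ in the appropriately normalized sense, i.e.\ \eqref{eq_188}. That chain is the substantive part, and the corollary itself is a one-line product.
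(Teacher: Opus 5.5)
Your proof is correct and matches the paper's: Corollary~\ref{cor_265} is just the product of the isotropic bounds $M(K)\lesssim\sqrt{\log n/n}$ from Theorem~\ref{thm_252} and $M^*(K)\lesssim\sqrt{n\log n}$ from Theorem~\ref{thm_186}. One minor slip, in an aside the proof does not use: Theorems~\ref{thm_230} and~\ref{thm_206} together give $\E h_K(G)\lesssim n\sqrt{\log n}$, not $\sqrt{n\log n}$, and \eqref{eq_949} then yields $M^*(K)\lesssim\sqrt{n\log n}$, not $\sqrt{\log n}$.
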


This should be compared with the landmark result of Pisier (see \cite{Pisier}), which
states that for every origin-symmetric convex body $K\subset\R^n$,
there exists an invertible linear transformation $T\in GL_n$ such
that
\begin{equation}\label{eq_276}
  \ell(TK)\leq C\log n.
\end{equation}
Corollary~\ref{cor_265} extends this estimate to
arbitrary convex bodies, without any symmetry assumption, and shows
that one may choose the position to be isotropic.
Before the present work, $MM^*$ estimates in the non-symmetric setting were pioneered by Rudelson \cite{Rudelson} with a bound of order $n^{1/3}$, up to logarithmic factors. More recently,
Bizeul and Klartag established an optimal mean-norm bound, up to the value of $\kappa_n$, and combined it with Emanuel Milman's mean-width estimate to obtain $\ell(K) \leq C\log^3 n$
in isotropic position.

We now include a
discussion of the optimality of the results presented above, beyond the isotropic
normalization.

\paragraph{Optimality of the estimates.}

The three estimates
\[
  M(K)\lesssim\sqrt{\frac{\log n}{n}},
  \qquad
  M^*(K)\lesssim\sqrt{n\log n},
  \qquad
  \ell(K)\lesssim\log n
\]
are all optimal in isotropic position. This remains true even if one
restricts attention to origin-symmetric convex bodies. As mentioned
above, the cube and the cross-polytope saturate the first two
inequalities, respectively. The third one is saturated by a product of
a cube and a cross-polytope, each factor living in half the dimension.
Details are provided in Section~\ref{sec_1045}.

For the individual quantities $M$ and $M^*$, a stronger statement holds. The estimates are in fact affine optimal : they cannot be
improved by choosing a different affine position.

\begin{corollary}\label{cor_319}
The mean-width and mean-norm estimates are affine optimal. Namely
\begin{equation}\label{eq_321}
  \sup_K\inf_{T,a} M(TK+a)
  \simeq\sqrt{\log n},
  \qquad
  \sup_K\inf_{T,a} M^*(TK+a)
  \simeq\sqrt{\log n}.
\end{equation}
Here, the suprema run over all convex bodies $K\subset\R^n$ satisfying
$|K|=|B_2^n|$, and the infima run over all $T\in SL_n$ and
$a\in\R^n$ such that $0\in\operatorname{int}(TK+a)$. The same
conclusions remain valid when the suprema are restricted to
origin-symmetric convex bodies.

\end{corollary}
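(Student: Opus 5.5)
Corollary~\ref{cor_319} has an upper bound and a lower bound for each of the two quantities.

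\emph{Upper bounds.} Given $K$ with $|K|=|B_2^n|$, choose an affine map $S$ so that $SK$ is isotropic; write $SK=\lambda TK+a$ with $T\in SL_n$ and $\lambda>0$. Then $\lambda=\operatorname{vrad}(SK)\lesssim\sqrt n$ by the easy direction of slicing. Since $M$ is $(-1)$-homogeneous, Theorem~\ref{thm_252} gives
\[
  M(TK+a/\lambda)=\lambda M(SK)\lesssim\sqrt n\sqrt{\log n/n}=\sqrt{\log n}.
\]
For $M^*$ there are two routes. The first uses Theorem~\ref{thm_186} together with the solved slicing problem, $\lambda\simeq\sqrt n$:
\[
  M^*(TK+a/\lambda)=\lambda^{-1}M^*(SK)\lesssim\sqrt{n\log n}/\sqrt n.
\]
The second avoids full slicing. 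Translate $K$ so that $0$ is its Santal\'o point, so that $|K^\circ|$ is comparable to $|B_2^n|$ by Bourgain--Milman and Santal\'o. Put $K^\circ$ in isotropic position $L=AK^\circ$ and apply Theorem~\ref{thm_252} to $L$. Using $M^*(A^{-T}K)=M(AK^\circ)$ and rescaling by the volume radius of $L$ and by Bourgain--Milman then gives $M^*\lesssim\sqrt{\log n}$. Translations are harmless here: polarity is taken with respect to the point that is interior to $K$.

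\emph{Lower bounds.} I would use the symmetric examples from Section~\ref{sec_1045}. For $M$, take $K$ to be a normalized cube $Q=c\sqrt n\,B_\infty^n$ with volume radius $1$, and show $\inf_T M(TQ)\gtrsim\sqrt{\log n}$. For symmetric bodies the translation can be removed: by Jensen applied to $\|\cdot\|_{TQ+a}$ and its reflection $\|\cdot\|_{-TQ+a}$, replacing $TQ+a$ by $TQ\supseteq\frac12\big((TQ+a)+(-TQ-a)\big)$ does not increase $M$ (up to a factor $2$). The key step is then the following. Via the Gaussian form, $M(TQ)\simeq n^{-1/2}\E\|T^{-1}G\|_{Q}$. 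Here $T^{-1}G$ is a Gaussian vector with covariance $T^{-1}T^{-T}$, which has determinant $1$, and $\|\cdot\|_Q=\|\cdot\|_\infty/(c\sqrt n)$. It remains to show that the expected maximum of $n$ Gaussians whose covariance has determinant $1$ is $\gtrsim\sqrt{\log n}$. I expect this to be the main obstacle. I would try Sudakov minoration or a volumetric argument: the diagonal variances $\sigma_i^2$ satisfy $\prod\sigma_i\ge\det^{1/2}=1$ by Hadamard, so many $\sigma_i\gtrsim 1$. I would still need enough decorrelation, which Sudakov provides: the Gaussian measure of $\{\|x\|_\infty\le t\}$ is at most $\prod_i P(|g_i|\le t/\sigma_i)$, by the Gaussian correlation inequality (Royen), and this is small for $t\ll\sqrt{\log n}$.

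For $M^*$, duality with the cross-polytope $B_1^n$ normalized to volume radius $1$ gives the same computation, since $h_{B_1^n}=\|\cdot\|_\infty$. Affine images are handled as before, with $M^*$ translation invariant up to the centering issue, and the normalization factor is $\simeq n^{-1/2}\sqrt n$. Since both examples are origin-symmetric, the restricted statement follows as well.
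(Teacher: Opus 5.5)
Your upper bounds are correct and follow the paper. The $M$ bound uses only $\operatorname{vrad}\lesssim\sqrt n$ in isotropic position, your first $M^*$ route is the paper's proof via $\lambda_K\simeq\sqrt n$, and your second is the dual route of the paper's remark. In that dual route, $AK^\circ$ can be taken linear because $0$ being the Santal\'o point of $K$ makes $K^\circ$ centered; only the Bourgain--Milman lower bound is actually needed. Your removal of translations is also fine, and in fact lossless, since $\norm{x}_{(A+B)/2}\le\frac12(\norm{x}_A+\norm{x}_B)$ (this is \eqref{eq_1172}). One slip: the cube of volume radius $1$ is $c\,n^{-1/2}B_\infty^n$, not $c\sqrt n\,B_\infty^n$. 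With the correct scaling, $M(TQ)\simeq\E\norm{T^{-1}G}_\infty$, which is the reduction you go on to state.

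The genuine gap is the lemma on which both lower bounds rest: $\E\norm{SG}_\infty\gtrsim\sqrt{\log n}$ whenever $|\det S|=1$. The step you propose goes the wrong way. The Gaussian correlation inequality says symmetric slabs are \emph{positively} correlated:
\[
  \gamma_n\bigl(\{x:\ \norm{Sx}_\infty\le t\}\bigr)\ \ge\ \prod_{i=1}^n P\bigl(|g|\le t/\sigma_i\bigr).
\]
This is a lower bound on the small-ball probability, so it gives nothing here. The reverse inequality you need is false even under $\det\Sigma=1$. Take rows $s_1=\lambda e_1$ and $s_i=e_1+\delta e_i$ for $i\ge2$, with $\delta=c/\sqrt{\log n}$ and $\lambda=\delta^{1-n}$. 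At $t=1$ the left side is $\gtrsim1/\lambda$, while the product is $\lesssim\lambda^{-1}(0.7)^{n-1}$. Likewise, $\prod_i\sigma_i\ge1$ from Hadamard does not give many $\sigma_i\gtrsim1$. More generally, no argument using only the marginal variances can work: identical coordinates have $\sigma_i=1$ and $\E\max_i|Y_i|\simeq1$. The determinant has to be used on the whole matrix.

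The paper avoids this entirely. $B_\infty^n$ and $B_1^n$ are $1$-symmetric, so by the Giannopoulos--Milman characterization \eqref{eq_1135} they are in minimal mean-width position. Via $M(TK)=M^*((T^{-1})^{T}K^\circ)$ they are also in minimal mean-norm position, so the infimum over $SL_n$ is attained at $T=\Id$, where $\E\norm{G}_\infty\simeq\sqrt{\log n}$.

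If you prefer a direct proof, your Sudakov/volumetric idea can be made to work as a dichotomy. Take a maximal $\frac14$-separated subset of the rows $s_i$ of $S$.
\begin{itemize}
\item If it has at least $\sqrt n$ elements, Sudakov minoration gives $\E\max_i\ip{s_i}{G}\gtrsim\sqrt{\log n}$.
\item Otherwise, fewer than $\sqrt n$ rows form a $\frac14$-net of all rows. Compute $|\det S|$ by Gram--Schmidt with these rows first: each contributes at most $R=\max_i|s_i|$, and every other row contributes at most $\frac14$. This gives $1\le R^{\sqrt n}4^{-(n-\sqrt n)}$, so $\E\norm{SG}_\infty\ge\sqrt{2/\pi}\,R$ is exponentially large.
\end{itemize}
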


The situation for the product $\ell(K)$ is subtler. Since $\ell(K)$
is scale invariant, no volume normalization is required. In the class
of all convex bodies, one has

\begin{corollary}\label{cor_340}
Corollary \ref{cor_265} is affine optimal. Namely
\begin{equation}\label{eq_342}
  \sup_K\inf_{T,a} \ell(TK+a)
  \simeq\log n,
\end{equation}
where the supremum runs over all convex bodies $K\subset\R^n$, and the
infimum runs over all $T\in GL_n$ and $a\in\R^n$ such that
$0\in\operatorname{int}(TK+a)$.
\end{corollary}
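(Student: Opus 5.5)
The upper bound in \eqref{eq_342} is immediate from Corollary~\ref{cor_265}. Given any convex body $K$, choose $T\in GL_n$ and $a\in\R^n$ with $TK+a$ isotropic. Then $0$ lies in the interior, since isotropic bodies are centered, and $\ell(TK+a)\leq C\log n$. All the work is therefore in the lower bound: we must exhibit one body $K$ such that \emph{every} affine image with the origin in its interior has $\ell\gtrsim\log n$.

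The plan is to split $\ell$ using the volume radius. For any position $L=TK+a$ we write
\[
  \ell(L)=\bigl(M(L)\operatorname{vrad}(L)\bigr)\cdot\Bigl(\frac{M^*(L)}{\operatorname{vrad}(L)}\Bigr).
\]
Both factors are scale invariant, and by \eqref{eq_157} and \eqref{eq_166} each is at least $1$. It then suffices to find one body for which both factors are $\gtrsim\sqrt{\log n}$ in every position \emph{simultaneously}. Corollary~\ref{cor_319} provides, separately, a body for which the first factor is always $\gtrsim\sqrt{\log n}$ (cube-like) and a body for which the second is (cross-polytope-like). Guided by the isotropic optimality example of Section~\ref{sec_1045}, I would take
\[
  K=B_\infty^{m}\times B_1^{m}\subset\R^{2m}.
\]
The task is then to show that the cube factor forces a large $M$ and the cross-polytope factor forces a large $M^*$, whatever $T$ and $a$ are.

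To transfer each single-factor lower bound to the product, I would use two monotonicity properties. First, the support function of $L$ dominates that of its orthogonal projection onto any subspace $E$. Averaging over the sphere, this gives $M^*(L)\geq c\sqrt{\dim E/n}\,M^*_E(P_EL)$, with $M^*_E$ computed on the sphere of $E$. Second, the gauge of $L$ restricted to $E$ is the gauge of the section $L\cap E$, which gives $M(L)\geq c\sqrt{\dim E/n}\,M_E(L\cap E)$, again with $M_E$ over the sphere of $E$. Since $T$ is arbitrary, $T(\R^m\times\{0\})$ and $T(\{0\}\times\R^m)$ are not orthogonal, so I would not project onto them directly. Instead I would use the volume identity $|L|=|T|\cdot|B_\infty^m|\cdot|B_1^m|$ and a Rogers--Shephard type estimate for sections times projections. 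This should relate $\operatorname{vrad}(L)$ to the volume radii of a section by the cube directions and a projection along them, up to absolute constants. Applying the affine lower bounds of Corollary~\ref{cor_319} in dimension $m=n/2$ to the section (a cube image) and to the projection (a cross-polytope image) would then give both factors $\gtrsim\sqrt{\log n}$.

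The main obstacle is this decoupling step. The affine map $T$ may shear the two factors together, and the translation $a$ may be chosen adversarially, so the "cube part" of $L$ seen through sections and the "cross-polytope part" seen through projections must be controlled with only constant losses in volume radius. If the Rogers--Shephard bookkeeping turns out to be too lossy, I would fall back on direct arguments for the two factors. For $M$, I would use a Dvoretzky--Rogers/Bourgain--Tzafriri-type statement that any linear image of $B_\infty^m$ has many almost-orthogonal contact directions, giving $\E\max$ of Gaussian coordinates $\sim\sqrt{\log n}$. For $M^*$, I would use the fact that any position of $B_1^m$ has $\gtrsim m$ well-spread vertices relative to its volume, giving $\E\max_i\ip{G}{v_i}\gtrsim\sqrt{\log n}$ times the volume radius.
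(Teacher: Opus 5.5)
Your upper bound is correct and is the paper's argument. The lower bound fails at the choice of body: $K=B_\infty^m\times B_1^m$ does \emph{not} have affine $MM^*$ product of order $\log n$. So no bookkeeping in the decoupling step, Rogers--Shephard or otherwise, can succeed, and neither can your fallback arguments, since they target the same body. The paper computes this in Section~\ref{sec_1045}: the linear image $\widetilde K=Q_m\times\frac{1}{\sqrt{\log m}}P_m$ has $M(\widetilde K)\simeq\sqrt{\log m/m}$ and $M^*(\widetilde K)\simeq\sqrt m$, so $\ell(\widetilde K)\simeq\sqrt{\log m}$. More generally, for $L=\alpha B_\infty^m\times\beta B_1^m$ one has $\E\norm{G}_L\simeq\max\{\sqrt{\log m}/\alpha,\,m/\beta\}$ and $\E h_L(G)\simeq\alpha m+\beta\sqrt{\log m}$. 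Choosing $\beta/\alpha\simeq m/\sqrt{\log m}$ balances the two blocks, and in that position your two factors $M(L)\operatorname{vrad}(L)$ and $M^*(L)/\operatorname{vrad}(L)$ are both only of order $(\log m)^{1/4}$. This shows exactly what goes wrong in the decoupling. Your section and projection bounds, combined with Corollary~\ref{cor_319}, give at best $\ell(L)\gtrsim\log m\cdot\operatorname{vrad}(P_{E'}L)/\operatorname{vrad}(L\cap E)$, and this ratio is not an affine invariant. Rescaling the two blocks independently moves it freely, whereas Rogers--Shephard only controls the geometric mean of the two volume radii, which is comparable to $\operatorname{vrad}(L)$.

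The missing idea is that the extremal body has to be non-symmetric. Your $K$ is origin-symmetric. For such bodies, translations only increase $M$ and leave $M^*$ unchanged, so the affine infimum reduces to a linear one. A symmetric body with $\inf_T\ell(TK)\gtrsim\log n$ would therefore disprove Conjecture~\ref{conj_358}. The paper instead takes $K$ to be a simplex and invokes the theorem of Banaszczyk, Litvak, Pajor and Szarek: $\ell(\Delta)\gtrsim\log n$ for every simplex $\Delta$ with $0\in\operatorname{int}\Delta$. Since any two simplices are affinely equivalent, this is directly a lower bound on $\inf_{T,a}\ell(TK+a)$. No separate control of $M$ and $M^*$ in a common position is needed.
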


Thus, Corollary~\ref{cor_265} is optimal even if one is allowed to
choose an arbitrary affine position. In this sense, the extension of
Pisier's estimate to non-symmetric convex bodies is optimal. In contrast, if one restricts attention to origin-symmetric convex
bodies, it is believed that Pisier's estimate may be improved; see
e.g. \cite[Section~6.8]{ArtsteinAvidanGiannopoulosMilman}.

\begin{conjecture}\label{conj_358}
For every origin-symmetric convex body $K\subset\R^n$, there exists an
invertible linear transformation $T\in GL_n$ such that
\begin{equation}\label{eq_361}
  \ell(TK)\leq C\sqrt{\log n},
\end{equation}
where $C>0$ is a universal constant.
\end{conjecture}

The conjectural order is the best possible, since it is saturated by
the cube, or equivalently by the cross-polytope. Since the logarithmic
estimate in Corollary~\ref{cor_265} is optimal even within the
class of origin-symmetric convex bodies, the conjecture above cannot
hold in general if one insists on using the isotropic position.

All the optimality statements above, together with the examples
establishing them, are discussed in greater detail in
Section~\ref{sec_1045}.

\paragraph{The parameter $\kappa_n$.}
We conclude the introduction by discussing the parameter $\kappa_n$
and its newly available dimension-free bound, used in
Theorem~\ref{thm_206}. Define
\begin{equation}\label{eq_381}
  \kappa_n
  =
  \sup_X\sup_{\theta\in\mathbb S^{n-1}}
  \norm{
    \E\bigl[\langle X,\theta\rangle X\otimes X\bigr]
  }_{\HS},
\end{equation}
where the first supremum runs over all isotropic log-concave random
vectors $X$ in $\R^n$, and $\norm{\cdot}_{\HS}$ denotes the
Hilbert-Schmidt norm. A dimension-free estimate for $\kappa_n$ was
recently obtained in~\cite{Letwin}. For the reader's convenience, we
explain how a very short modification of the argument of Chen and
Klartag~\cite{ChenKlartag} recovers this result. The following sharp thin-shell estimate is proved in \cite{ChenKlartag}
\[
  \Var(|X|^2)\leq8n
\]
for isotropic log-concave random vectors. A minor anisotropic
adaptation of their proof gives
\begin{equation}\label{eq_401}
  \Var(|X|^2)
  \leq
  8\Tr(\Sigma^2)
\end{equation}
for every centered log-concave random vector $X$ with covariance
matrix $\Sigma$.

Consequently, if $X$ is isotropic and $A$ is symmetric positive
semidefinite, then
\[
  \Var\bigl(\langle AX,X\rangle\bigr)
  \leq
  8\Tr(A^2).
\]
For an arbitrary symmetric matrix, splitting $A$ into its positive and
negative parts gives
\[
  \Var\bigl(\langle AX,X\rangle\bigr)
  \leq
  16\Tr(A^2).
\]
This is sufficient to obtain
\begin{equation}\label{eq_424}
  \kappa_n^2\leq16,
\end{equation}
see Corollary~\ref{cor_915} below. With a little more work one may
obtain the slightly better estimate $\kappa_n^2\leq8$, but this is not
relevant for our purposes.

The parameter $\kappa_n$ first appeared in Eldan's original paper on
the stochastic-localization process~\cite{Eldan}. Let us briefly
recall its role. We refer to the lecture notes of Klartag and
Lehec~\cite{KlartagLehecSurvey} for background on the process, its
history and applications.

Let $X\sim\mu$ be isotropic and log-concave, let $(B_t)_{t\geq0}$ be an
independent Brownian motion, and let
\[
  \mu_t=\mathcal L\bigl(X\,\big|\,tX+B_t\bigr).
\]
The stochastic process
$(\mu_t)_{t\geq0}$ is called the stochastic-localization process
initiated at $\mu$. Write $A_t=\Cov(\mu_t)$, so that $A_0=\Id$. Put
\begin{equation}\label{eq_445}
  T_0=\frac{c}{\kappa_n^2\log n}
\end{equation}
for an appropriate constant $c>0$. With probability at least
$1-Ce^{-c/T_0}$, one has
\begin{equation}\label{eq_450}
  \frac12\Id\preceq A_t\preceq2\Id,
  \qquad 0\leq t\leq T_0;
\end{equation}
see~\cite[Proposition~2.3]{BizeulKlartag}. The upper bound in
\eqref{eq_450} is essentially due to Eldan~\cite{Eldan},
while the lower bound was established by Bizeul and
Klartag~\cite{BizeulKlartag}, following previous work of Lee and
Vempala~\cite{LeeVempala}.

In view of~\eqref{eq_424}, we may take $T_0=c/\log n$, and, for
a suitable choice of $c>0$, the failure probability in
\eqref{eq_450} is at most $n^{-10}$. The order $1/\log n$ is
sharp. It is explained in~\cite{KlartagLehecSurvey} that products of
centered exponentials saturate the upper bound, while an analogous
argument shows that the uniform distribution on the cube saturates the
lower bound.

In~\cite{KlartagKLS} Klartag establishes an improved log-concave Lichnerowicz inequality and uses it to prove that 
\begin{equation}\label{eq_469}
    \Psi_n^2 \lesssim T_0^{-1/2},
\end{equation}
where $\Psi_n$ is the KLS constant~\cite{KannanLovaszSimonovits}, that is, the best constant such that for every isotropic log-concave vector $X$, and every Lipschitz function $f$, we have the Poincaré inequality
$$\Var f(X) \leq \Psi_n^2 \, \E |\nabla f(X)|^2.$$
Therefore, using~\eqref{eq_445},~\eqref{eq_469} and the
dimension-free estimate~\eqref{eq_424}, one obtains
\begin{equation}\label{eq_476}
  \Psi_n\leq C(\log n)^{1/4},
\end{equation}
as recorded in~\cite{Letwin}.

The same covariance estimate underlies the gauge comparisons in
\cite{EldanLehec,BizeulKlartag}, which were established with a factor of
order $T_0^{-1/2}$. Therefore, using~\eqref{eq_445} and
\eqref{eq_424} yields Theorem~\ref{thm_206}.

\medskip

The paper is organized as follows. Section~\ref{sec_513} is purely
expository. It provides a sketch of the
argument of Chen and Klartag and explains the minor anisotropic adaptation of their proof yielding the sharp generalized variance bound, and its application to $\kappa_n$, as recorded in \cite{Letwin}. Section~\ref{sec_940} proves
Theorem~\ref{thm_230} and uses it to deduce Theorem~\ref{thm_186}. It also contains a discussion on the optimality of the results, and records applications of the $MM^*$
estimate to Banach-Mazur distances and the flatness constant.

\paragraph{Notation.}
Throughout the paper, $C,c>0$ denote universal constants whose values
may change from one occurrence to the next. We write $A\lesssim B$ if
$A\leq CB$, and $A\simeq B$ if both $A\lesssim B$ and $B\lesssim A$.
The notation $|x|$ denotes the Euclidean norm of a vector, whereas
$|K|$ denotes the Lebesgue volume of a measurable set. For a matrix
$A$, we write
\[
  \norm{A}_{\HS}=\sqrt{\Tr(A^\top A)},
\]
and $A\preceq B$ means that $B-A$ is positive semidefinite. Finally,
for $x,y\in\R^n$, we set $x\otimes y=(x_i y_j)_{i,j=1}^n$.

\paragraph{Acknowledgements} We would like to thank Boaz Klartag, Dan Mikulincer and Beatrice-Helen Vritsiou for many insightful and stimulating conversations about $MM^*$ estimate and for their feedback on an earlier version of this manuscript. We learnt about Conjecture \ref{conj_358} from Emanuel Milman.

\section{An anisotropic version of the Chen-Klartag argument}
\label{sec_513}

We stress that this section is purely expository and contains no
original content. It is based entirely on the preprint of Chen and Klartag
\cite{ChenKlartag}, and describes the minor anisotropic adaptation of
their argument, first recorded in~\cite{Letwin}, which yields a
generalized thin-shell estimate.

The thin-shell problem originates in the work of Anttila, Ball and
Perissinaki~\cite{AnttilaBallPerissinaki} and was subsequently
formulated as the variance conjecture by Bobkov and
Koldobsky~\cite{BobkovKoldobsky}. It asks whether every isotropic
log-concave random vector $X$ in $\R^n$ satisfies
\[
  \Var(|X|^2)\leq Cn.
\]
After a long sequence of developments, this conjecture was recently
resolved by Klartag and Lehec~\cite{KlartagLehec}, following a
breakthrough of Guan~\cite{Guan}. Most of the preceding approaches
used Eldan's stochastic-localization process; see, among others,
\cite{Eldan,LeeVempala,KlartagLehecPolylog,KlartagKLS,Guan,KlartagLehec}.
Chen and Klartag~\cite{ChenKlartag} gave a different proof, based on
log-concave moment measures, and obtained the optimal constant $8$.
The purpose of this section is to explain the minor anisotropic
modification of their argument which gives
\begin{equation}\label{eq_541}
  \Var(|X|^2)\leq8\Tr(\Sigma^2)
\end{equation}
for a centered log-concave vector with covariance matrix $\Sigma$, as
recorded in~\cite{Letwin}. The question of studying thin-shell
estimates beyond the isotropic setting was put forward
in~\cite{AlonsoBastero}.

A common starting point in recent approaches to thin-shell problems is
the $H^{-1}$ inequality of Klartag~\cite{KlartagHminus}, which was extended to general log-concave measures in~\cite{BartheKlartag}. For a centered function
$f\in L^2(\mu)$, set
\begin{equation}\label{eq_554}
  \norm{f}_{H^{-1}(\mu)}
  =
  \sup\left\{
    \int fg\dd\mu:
    g\in C_c^\infty(\R^n),\
    \int|\nabla g|^2\dd\mu\leq1
  \right\}.
\end{equation}
Here and below, $ C_c^\infty(\R^n)$ is the class of smooth compactly supported functions. The $H^{-1}$ inequality asserts that if $f$ is locally Lipschitz and 
$f,\partial_1f,\ldots,\partial_nf\in L^2(\mu)$ are centered,
then
\begin{equation}\label{eq_570}
  \Var_\mu(f)
  \leq
  \sum_{i=1}^n\norm{\partial_i f}_{H^{-1}(\mu)}^2.
\end{equation}
Applying this inequality to
$f(x)=|x|^2-\int|x|^2\dd\mu(x)$ gives
\begin{equation}\label{eq_577}
  \Var(|X|^2)
  \leq
  4\sum_{i=1}^n\norm{x_i}_{H^{-1}(\mu)}^2.
\end{equation}

The right-hand side admits a simple interpretation in terms of Stein
kernels. Recall that a square-integrable matrix field
$\tau:\R^n\to\R^{n\times n}$ is a Stein kernel for $\mu$ if
\begin{equation}\label{eq_586}
  \int x\cdot F(x)\dd\mu(x)
  =
  \int\ip{\tau(x)}{DF(x)}_{\HS}\dd\mu(x)
\end{equation}
for every $F\in C_c^\infty(\R^n;\R^n)$, where
$DF$ denotes the Jacobian matrix of $F$.

\begin{proposition}\label{prop_594}
Let $\tau$ be a Stein kernel for $\mu$. Then
\begin{equation}\label{eq_596}
  \Var(|X|^2)
  \leq
  4\int\norm{\tau(x)}_{\HS}^2\dd\mu(x).
\end{equation}
\end{proposition}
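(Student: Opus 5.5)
Starting from \eqref{eq_577}, the task is to show that for each coordinate $i$,
\[
  \norm{x_i}_{H^{-1}(\mu)}^2\leq\int\sum_{j}\tau_{ij}(x)^2\dd\mu(x),
\]
that is, the squared $H^{-1}$ norm of $x_i$ is bounded by the $L^2(\mu)$ norm of the $i$-th row of the Stein kernel. Summing over $i$ then gives $\sum_i\norm{x_i}_{H^{-1}(\mu)}^2\leq\int\norm{\tau}_{\HS}^2\dd\mu$, and \eqref{eq_596} follows directly from \eqref{eq_577}. Centering is not an issue: $\mu$ is centered as in the setting above, and it is also implied by the Stein identity applied to $F=e_i$, suitably cut off.

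For the coordinate bound, fix $i$ and a test function $g\in C_c^\infty(\R^n)$ with $\int|\nabla g|^2\dd\mu\leq1$. Apply the Stein identity \eqref{eq_586} to the vector field $F=g\,e_i$, which is smooth and compactly supported. Its Jacobian is $DF=e_i\otimes\nabla g$, so $\ip{\tau}{DF}_{\HS}=\sum_j\tau_{ij}\partial_j g$. This gives
\[
  \int x_i g\dd\mu=\int\sum_j\tau_{ij}\,\partial_jg\dd\mu
  \leq\Bigl(\int\sum_j\tau_{ij}^2\dd\mu\Bigr)^{1/2}\Bigl(\int|\nabla g|^2\dd\mu\Bigr)^{1/2},
\]
by Cauchy--Schwarz, applied pointwise in $\R^n$ and then in $L^2(\mu)$. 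Taking the supremum over admissible $g$ in \eqref{eq_554} yields the claimed bound on $\norm{x_i}_{H^{-1}(\mu)}^2$.

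The only point needing care is the index convention. I must check that $\ip{\tau}{e_i\otimes\nabla g}_{\HS}$ pairs $\nabla g$ with the $i$-th row of $\tau$ and not its column. Either way, summing the squares over $i$ produces the full Hilbert--Schmidt norm, so the final inequality holds regardless of which convention the paper uses. The hypotheses of \eqref{eq_577} are inherited from the $H^{-1}$ inequality \eqref{eq_570}, which we take as given.
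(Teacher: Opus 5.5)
Your proposal is correct and follows the paper's proof essentially verbatim: you apply the Stein identity to $F=g\,e_i$, bound $\norm{x_i}_{H^{-1}(\mu)}^2$ by $\int\sum_j\tau_{ij}^2\dd\mu$ via Cauchy--Schwarz, sum over $i$, and conclude with \eqref{eq_577}. Your index check is also right: $DF=e_i\otimes\nabla g$ pairs $\nabla g$ with the $i$-th row of $\tau$, exactly as in the paper.
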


\begin{proof}
For $g\in C_c^\infty(\R^n)$, applying the Stein identity to
$F=ge_i$ gives
\[
  \int x_i g\dd\mu
  =
  \int\sum_{j=1}^n\tau_{ij}\partial_jg\dd\mu.
\]
By Cauchy-Schwarz,
\[
  \norm{x_i}_{H^{-1}(\mu)}^2
  \leq
  \int\sum_{j=1}^n\tau_{ij}^2\dd\mu.
\]
Summing over $i$ and using~\eqref{eq_577}
proves~\eqref{eq_596}.
\end{proof}

In fact, Courtade, Fathi and Pananjady
\cite{CourtadeFathiPananjady} construct a distinguished Stein kernel
$\tauL$, related to the Langevin generator of $\mu$, satisfying
\begin{equation}\label{eq_624}
  \int\norm{\tauL}_{\HS}^2\dd\mu
  =
  \sum_{i=1}^n\norm{x_i}_{H^{-1}(\mu)}^2.
\end{equation}
Thus the thin-shell problem is reduced to constructing a Stein kernel
with sufficiently small Hilbert-Schmidt energy. The proof of Chen and
Klartag uses a different kernel, built from moment measures, which we
now describe.

\subsection{Moment measures and the Chen-Klartag argument}

Let $\mu$ be a centered log-concave probability measure on $\R^n$,
with covariance matrix
\[
  \Sigma=\int x\otimes x\dd\mu(x).
\]
The moment-measure theorem of Cordero-Erausquin and
Klartag~\cite{CorderoKlartag} provides an essentially continuous
convex function $\psi:\R^n\to\R\cup\{+\infty\}$, unique up to
translation of its argument, such that
\begin{equation}\label{eq_645}
  \int_{\R^n}e^{-\psi(y)}\dd y=1,
  \qquad
  (\nabla\psi)_\#\bigl(e^{-\psi(y)}\dd y\bigr)=\mu.
\end{equation}

Following Chen and Klartag~\cite{ChenKlartag}, we assume temporarily
that $\mu$ has density $e^{-V}$ on a bounded open convex set $K$, with
smooth boundary, and that $V$ is smooth and convex in a neighborhood
of $K$. Under these assumptions, Klartag~\cite{KlartagMoment} proved
that $\psi$ is smooth and strictly convex, and that
$\nabla\psi:\R^n\to K$ is a diffeomorphism. Set
\begin{equation}\label{eq_657}
  \dd\nu(y)=e^{-\psi(y)}\dd y,
  \qquad
  H(y)=D^2\psi(y).
\end{equation}

The Hessian $H$, transported to the target measure, is a symmetric
positive-definite Stein kernel, as observed by
Fathi~\cite[Theorem~2.3]{Fathi}. We denote it by
\begin{equation}\label{eq_666}
  \tauM(x)=H\bigl((\nabla\psi)^{-1}(x)\bigr).
\end{equation}
Indeed, formally, if $F\in C_c^\infty(\R^n;\R^n)$, integration by
parts and the push-forward relation give
\begin{align*}
  \int_Kx\cdot F(x)\dd\mu(x)
  &=\int_{\R^n}\nabla\psi(y)\cdot F(\nabla\psi(y))\dd\nu(y)\\
  &=\int_{\R^n}\Tr\bigl(H(y)DF(\nabla\psi(y))\bigr)\dd\nu(y)\\
  &=\int_K\ip{\tauM(x)}{DF(x)}_{\HS}\dd\mu(x).
\end{align*}
Moreover,
\begin{equation}\label{eq_678}
  \int_{\R^n}H\dd\nu
  =\int_Kx\otimes x\dd\mu(x)
  =\Sigma.
\end{equation}

The change-of-variables formula in~\eqref{eq_645} yields the
Monge-Amp\`ere equation
\begin{equation}\label{eq_686}
  e^{-V(\nabla\psi)}\det H=e^{-\psi}.
\end{equation}
As explained in \cite{KlartagMoment}, it is useful to regard $H$ as a Riemannian
metric on $\R^n$, endowed with the probability measure
$\nu=e^{-\psi}\dd y$. The associated symmetric generator is
\begin{equation}\label{eq_692}
  \cL f
  =e^\psi\operatorname{div}\bigl(e^{-\psi}H^{-1}\nabla f\bigr),
\end{equation}
and its carr\'e du champ is
\begin{equation}\label{eq_697}
  \Gamma(f,g)=\ip{H^{-1}\nabla f}{\nabla g}.
\end{equation}
Thus
\[
  \int g\,\cL f\dd\nu=-\int\Gamma(f,g)\dd\nu.
\]
The Brascamp-Lieb inequality~\cite{BrascampLieb} for the measure
$\nu$ reads
\begin{equation}\label{eq_706}
  \Var_\nu(f)\leq\int\Gamma(f)\dd\nu,
\end{equation}
where $\Gamma(f)=\Gamma(f,f)$. When $\cL$ is applied to a matrix
field, it acts entrywise.

We now give a brief overview of the part of the Chen-Klartag argument
used below. Write
\[
  \psi_i=\partial_i\psi,
  \qquad
  \psi_{ij}=\partial_i\partial_j\psi,
  \qquad
  \psi_{ijk}=\partial_i\partial_j\partial_k\psi.
\]
For $i=1,\ldots,n$, put
\[
  C_i=\partial_iH,
  \qquad (C_i)_{jk}=\psi_{ijk}.
\]
Differentiating the Monge-Amp\`ere equation produces the
nonnegative matrix fields
\begin{equation}\label{eq_728}
  A=H\bigl(D^2V\circ\nabla\psi\bigr)H\succeq0,
  \qquad
  Q_{ij}=\Tr(H^{-1}C_iH^{-1}C_j),
  \quad Q\succeq0,
\end{equation}
and the identity
\begin{equation}\label{eq_735}
  \cL H+H=A+Q.
\end{equation}
This is Lemma~3.1 of~\cite{ChenKlartag}. It refines an earlier argument of Klartag \cite{KlartagMoment} who established the positivity of the left-hand side of \eqref{eq_735}.

We are interested in bounding the Hilbert-Schmidt norm of the kernel
$\tauM$, namely
\begin{equation}\label{eq_742}
  N_2=\int_{\R^n}\Tr(H^2)\dd\nu
  =\int\norm{\tauM(x)}_{\HS}^2\dd\mu(x).
\end{equation}
At a fixed point, choose coordinates in which
\[
  H=\operatorname{diag}(\lambda_1,\ldots,\lambda_n),
  \qquad \lambda_i>0,
\]
and define
\begin{align*}
  d_2
  &=\sum_{a,i,j=1}^n\frac{\psi_{aij}^2}{\lambda_a},\\
  a_2&=\Tr(HA),\\
  q_2
  &=\Tr(HQ)
  =\sum_{a,i,j=1}^n
  \frac{\lambda_a}{\lambda_i\lambda_j}\psi_{aij}^2.
\end{align*}
Put
\begin{equation}\label{eq_762}
  D_2=\int d_2\dd\nu,
  \qquad
  A_2=\int a_2\dd\nu,
  \qquad
  Q_2=\int q_2\dd\nu.
\end{equation}
The two identities used below are Lemmas~3.3 and~3.5
of~\cite{ChenKlartag}.

\begin{lemma}[Chen-Klartag]\label{lem_772}
Under the preceding regularity assumptions,
\begin{equation}\label{eq_774}
  N_2=D_2+A_2+Q_2,
\end{equation}
and
\begin{equation}\label{eq_778}
  q_2-d_2
  =\frac16\sum_{a,i,j=1}^n
  \frac{\psi_{aij}^2}{\lambda_a\lambda_i\lambda_j}
  \Bigl[(\lambda_a-\lambda_i)^2
       +(\lambda_i-\lambda_j)^2
       +(\lambda_j-\lambda_a)^2\Bigr]\geq0.
\end{equation}
In particular, $Q_2\geq D_2$.
\end{lemma}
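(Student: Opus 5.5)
The plan is to derive the first identity~\eqref{eq_774} by integrating the matrix identity~\eqref{eq_735} against $H$. The second identity~\eqref{eq_778} is pure algebra, using only the full symmetry of the third derivatives $\psi_{aij}$ in their three indices.

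\emph{First identity.} Take the Hilbert--Schmidt inner product of~\eqref{eq_735} with $H$ and integrate against $\nu$:
\[
  \int\Tr(H\,\cL H)\dd\nu+\int\Tr(H^2)\dd\nu
  =\int\Tr(HA)\dd\nu+\int\Tr(HQ)\dd\nu .
\]
The second term on the left is $N_2$ by~\eqref{eq_742}, and the right-hand side is $A_2+Q_2$. Since $\cL$ acts entrywise, the first term equals $\sum_{j,k}\int\psi_{jk}\,\cL\psi_{jk}\dd\nu$. Integrating by parts gives
\[
  \sum_{j,k}\int\psi_{jk}\,\cL\psi_{jk}\dd\nu
  =-\sum_{j,k}\int\Gamma(\psi_{jk})\dd\nu
  =-\int\sum_{j,k}\ip{H^{-1}\nabla\psi_{jk}}{\nabla\psi_{jk}}\dd\nu .
\]
In coordinates where $H=\operatorname{diag}(\lambda_1,\ldots,\lambda_n)$ the integrand is $\sum_{a,j,k}\psi_{ajk}^2/\lambda_a=d_2$. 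Hence this term equals $-D_2$, and rearranging gives $N_2=D_2+A_2+Q_2$.

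The only delicate point is justifying the integration by parts: we need the boundary terms at infinity to vanish and all integrals to be finite. Under the standing regularity assumptions ($K$ bounded and smooth, $V$ smooth up to the boundary), Klartag's results~\cite{KlartagMoment} give that $\psi$ is smooth, with $\nabla\psi$ bounded (it takes values in $K$) and $e^{-\psi}$ decaying exponentially. I would first integrate by parts against a cutoff $\chi_R$ and then let $R\to\infty$. For this I need bounds on $H$ and its derivatives; I would take them from the estimates used in~\cite{ChenKlartag}. This is the step I expect to be the main (technical) obstacle, though it is routine given those estimates.

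\emph{Second identity.} Pointwise, in the diagonalizing coordinates,
\[
  q_2-d_2=\sum_{a,i,j}\psi_{aij}^2\Bigl(\frac{\lambda_a}{\lambda_i\lambda_j}-\frac1{\lambda_a}\Bigr).
\]
Since $\psi_{aij}^2$ is symmetric in $(a,i,j)$, we may replace the bracket by its average over the three cyclic relabelings of $(a,i,j)$. Over the common denominator $\lambda_a\lambda_i\lambda_j$, the averaged bracket becomes
\[
  \frac{(\lambda_a^2+\lambda_i^2+\lambda_j^2)-(\lambda_a\lambda_i+\lambda_i\lambda_j+\lambda_j\lambda_a)}{3\lambda_a\lambda_i\lambda_j}
  =\frac{(\lambda_a-\lambda_i)^2+(\lambda_i-\lambda_j)^2+(\lambda_j-\lambda_a)^2}{6\lambda_a\lambda_i\lambda_j}.
\]
This is exactly~\eqref{eq_778}. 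Nonnegativity is then clear because $\lambda_i>0$. Integrating against $\nu$ gives $Q_2\geq D_2$.
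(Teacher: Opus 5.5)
Your proposal is correct and follows essentially the same route as the paper: pairing \eqref{eq_735} with $H$ and applying $\int g\,\cL f\dd\nu=-\int\Gamma(f,g)\dd\nu$ entrywise is exactly the integrated diffusion product rule for $\Tr(H^2)$ that the paper invokes, which gives $\int\Tr(H\,\cL H)\dd\nu=-D_2$ and hence \eqref{eq_774}. Your cyclic symmetrization of $\psi_{aij}^2$ is precisely the use of the symmetry of $D^3\psi$ on which the paper bases \eqref{eq_778}, and, like the paper, you defer the justification of the integration by parts under the regularity assumptions to \cite{ChenKlartag}.
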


The identity~\eqref{eq_774} follows by applying the
diffusion product rule to $\Tr(H^2)$ and integrating~\eqref{eq_735}.
The inequality~\eqref{eq_778} relies on the symmetry of the third
derivative tensor $D^3\psi$. We may now state the Chen-Klartag
bootstrap~\cite[Theorem~1.5]{ChenKlartag}.

\begin{theorem}[Chen-Klartag]\label{thm_795}
If $\mu$ is isotropic, then
\begin{equation}\label{eq_797}
  \int\norm{\tauM(x)}_{\HS}^2\dd\mu(x) = \int_{\R^n}\Tr(H^2)\dd\nu\leq2n.
\end{equation}
\end{theorem}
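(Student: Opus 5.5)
The plan is a bootstrap: bound $N_2$ from above by $n+D_2$ via the Brascamp--Lieb inequality \eqref{eq_706}, and from below by $2D_2$ via Lemma~\ref{lem_772}. Throughout I work first under the regularity assumptions preceding Lemma~\ref{lem_772} ($\mu$ with smooth convex potential on a smooth bounded convex domain) and remove them by approximation at the end.

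\emph{Upper bound.} By \eqref{eq_678} and isotropy, $\int H\dd\nu=\Sigma=\Id$. Hence, expanding the square,
\[
  N_2=\int\Tr(H^2)\dd\nu
  =\Tr(\Id)+\int\norm{H-\Id}_{\HS}^2\dd\nu
  =n+\sum_{i,j=1}^n\Var_\nu(\psi_{ij}).
\]
Apply \eqref{eq_706} to each entry $f=\psi_{ij}$, whose $\nu$-mean is $\delta_{ij}$. This gives $\Var_\nu(\psi_{ij})\leq\int\ip{H^{-1}\nabla\psi_{ij}}{\nabla\psi_{ij}}\dd\nu$. At a fixed point, the quantity $\sum_{i,j}\ip{H^{-1}\nabla\psi_{ij}}{\nabla\psi_{ij}}$ is invariant under orthogonal changes of coordinates, since it is a full contraction of $D^3\psi\otimes D^3\psi$ against $H^{-1}$. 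In the diagonalizing frame it equals $\sum_{a,i,j}\psi_{aij}^2/\lambda_a=d_2$. Integrating therefore yields
\[
  N_2\leq n+D_2.
\]

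\emph{Lower bound and conclusion.} By \eqref{eq_774}, $N_2=D_2+A_2+Q_2$. Here $A_2=\int\Tr(HA)\dd\nu\geq0$ because both $H$ and $A$ are positive semidefinite, and $Q_2\geq D_2$ by \eqref{eq_778}. Hence $N_2\geq2D_2$, i.e.\ $D_2\leq N_2/2$. Plugging this into the upper bound gives $N_2\leq n+N_2/2$. Provided $N_2<\infty$, this rearranges to $N_2\leq2n$, which is \eqref{eq_797}.

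\emph{Main obstacle.} The algebra is short; the delicate point is justifying the steps analytically. We need $N_2<\infty$ so that the subtraction is legitimate. We also need the integrations by parts behind \eqref{eq_774} and the application of Brascamp--Lieb to $\psi_{ij}$ to be valid, which requires $\psi_{ij}\in L^2(\nu)$ and $\Gamma(\psi_{ij})\in L^1(\nu)$. Under the smooth bounded-domain assumptions, Klartag's regularity theory for moment measures~\cite{KlartagMoment} controls $H$ and its derivatives, and $\nabla\psi$ takes values in the bounded set $K$. I would use this to get the needed integrability, truncating with cutoff functions if necessary and letting the cutoff tend to infinity.

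Finally, for a general isotropic log-concave $\mu$, I would approximate it by measures satisfying the regularity assumptions, renormalized to be isotropic. Passing to the limit then uses stability of moment measures under weak convergence, together with lower semicontinuity of $\int\norm{\tauM}_{\HS}^2\dd\mu$, or simply the fact that only the variance bound via Proposition~\ref{prop_594} is needed downstream.
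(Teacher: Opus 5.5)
Your proposal is correct and follows the paper's proof exactly. Brascamp--Lieb applied entrywise to $H$, using $\int H\dd\nu=\Id$, gives $N_2-n\leq D_2$; Lemma~\ref{lem_772} together with $A_2\geq0$ gives $N_2\geq2D_2$; combining the two yields $N_2\leq2n$. Your check that $\sum_{i,j}\Gamma(\psi_{ij})=d_2$ and your remark that $N_2<\infty$ is needed for the final rearrangement are correct details that the paper leaves implicit.
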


\begin{proof}
If $\mu$ is isotropic, then~\eqref{eq_678} gives
$\int H\dd\nu=\Id$. Applying~\eqref{eq_706} to every entry of
$H$ yields
\begin{equation}\label{eq_806}
  N_2-n
  =\int\norm{H-\Id}_{\HS}^2\dd\nu
  \leq D_2.
\end{equation}
On the other hand, Lemma~\ref{lem_772} gives
\[
  N_2=D_2+A_2+Q_2\geq2D_2.
\]
Combining the two inequalities gives $N_2\leq2n$.
\end{proof}

\subsection{The anisotropic adaptation}

The preceding proof does not use isotropy until the Brascamp-Lieb
step~\eqref{eq_806}. Replacing that single line
gives the following anisotropic version.

\begin{proposition}\label{prop_824}
If $\mu$ has covariance matrix $\Sigma$,
\begin{equation}\label{eq_826}
  \int\norm{\tauM(x)}_{\HS}^2\dd\mu(x)
  =\int_{\R^n}\Tr(H^2)\dd\nu
  \leq2\Tr(\Sigma^2).
\end{equation}
\end{proposition}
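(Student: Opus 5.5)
We follow the proof of Theorem~\ref{thm_795} line by line and change only the Brascamp--Lieb step~\eqref{eq_806}. Isotropy was used only to identify $\int H\dd\nu$ with $\Id$. In general,~\eqref{eq_678} gives $\int H\dd\nu=\Sigma$, and we recenter $H$ at $\Sigma$ instead of $\Id$.

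First we compute the variance of $H$ entrywise. Since $\int H\dd\nu=\Sigma$,
\[
  \int\norm{H-\Sigma}_{\HS}^2\dd\nu
  =\int\Tr(H^2)\dd\nu-2\Tr\Bigl(\Sigma\int H\dd\nu\Bigr)+\Tr(\Sigma^2)
  =N_2-\Tr(\Sigma^2).
\]
Next we apply the Brascamp--Lieb inequality~\eqref{eq_706} to each entry $\psi_{ij}$ and sum over $i,j$:
\[
  N_2-\Tr(\Sigma^2)\leq\sum_{i,j}\int\Gamma(\psi_{ij})\dd\nu .
\]
At a point where $H=\operatorname{diag}(\lambda_1,\ldots,\lambda_n)$, we have $\Gamma(\psi_{ij})=\sum_a\psi_{aij}^2/\lambda_a$, so the sum equals $d_2$. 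The right-hand side is therefore exactly $D_2$. This computation is the same as in the isotropic case and does not depend on the mean of $H$. The one subtlety is that $\sum_{i,j}\Gamma(\psi_{ij})$ is invariant under orthogonal changes of coordinates, so we may compute it in the diagonalizing frame.

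Lemma~\ref{lem_772} does not use isotropy, so it still gives $N_2=D_2+A_2+Q_2\geq 2D_2$, using $A_2\geq0$ and $Q_2\geq D_2$. Combining the two bounds,
\[
  N_2-\Tr(\Sigma^2)\leq D_2\leq N_2/2,
\]
so $N_2\leq2\Tr(\Sigma^2)$. The identity $N_2=\int\norm{\tauM}_{\HS}^2\dd\mu$ is~\eqref{eq_742}.

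The main obstacle is the regularity assumptions: a smooth convex domain and smooth $V$. These are needed to apply~\eqref{eq_735} and Lemma~\ref{lem_772}, and to ensure that $N_2$ and $D_2$ are finite so the subtraction is legitimate. Under the standing assumptions this follows from Klartag's regularity results, exactly as in Chen--Klartag. For a general centered log-concave $\mu$, we would approximate by such regular measures and pass to the limit, using that $\Sigma$ converges. The estimate we ultimately need, namely~\eqref{eq_541} via Proposition~\ref{prop_594}, concerns $\Var(|X|^2)$, which is continuous under this approximation.
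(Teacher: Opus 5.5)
Your proposal is correct and matches the paper's proof essentially line for line. You recenter at $\Sigma=\int H\dd\nu$, apply Brascamp--Lieb entrywise to get $N_2-\Tr(\Sigma^2)\leq D_2$, and combine this with $N_2=D_2+A_2+Q_2\geq 2D_2$. Your extra remarks on the finiteness of $N_2$ and on removing the regularity assumptions by approximation go slightly beyond what the paper writes here (it defers approximation to the proof of Theorem~\ref{thm_859}), but they do not change the argument.
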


\begin{proof}
By~\eqref{eq_678}, $\int H\dd\nu=\Sigma$. Applying the
Brascamp-Lieb inequality~\eqref{eq_706} to every entry of $H$
and summing gives
\begin{align}
  N_2-\Tr(\Sigma^2)
  &=\int\norm{H-\Sigma}_{\HS}^2\dd\nu\notag\\
  &=\sum_{i,j=1}^n\Var_\nu(H_{ij})
  \leq\sum_{i,j=1}^n\int\Gamma(H_{ij})\dd\nu
  =D_2.
  \label{eq_843}
\end{align}
Since $A_2\geq0$ and $Q_2\geq D_2$, we still have
\[
  N_2=D_2+A_2+Q_2\geq2D_2.
\]
Using~\eqref{eq_843}, we conclude that
\[
  N_2\geq2\bigl(N_2-\Tr(\Sigma^2)\bigr),
\]
which is equivalent to~\eqref{eq_826}.
\end{proof}

Therefore, we arrive at the following sharp anisotropic thin-shell
estimate, recorded in~\cite{Letwin}.

\begin{theorem}\label{thm_859}
Let $X$ be a centered log-concave random vector in $\R^n$ with
covariance matrix $\Sigma$. Then
\begin{equation}\label{eq_862}
  \Var(|X|^2)\leq8\Tr(\Sigma^2).
\end{equation}
The constant $8$ is optimal.
\end{theorem}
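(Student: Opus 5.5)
The plan is to combine Proposition~\ref{prop_594} with Proposition~\ref{prop_824}, after an approximation argument, and then to check optimality on an explicit example.

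\textbf{Upper bound.} First assume $\mu$ satisfies the regularity assumptions of the previous subsection: density $e^{-V}$ on a bounded smooth convex domain, with $V$ smooth and convex near its closure. Then $\tauM$ is a Stein kernel for $\mu$, by the computation following~\eqref{eq_666}. Proposition~\ref{prop_594} applied with $\tau=\tauM$ gives
\[
  \Var(|X|^2)\leq 4\int\norm{\tauM}_{\HS}^2\dd\mu .
\]
Proposition~\ref{prop_824} bounds the right-hand side by $2\Tr(\Sigma^2)$, which yields~\eqref{eq_862}. To be careful about the Stein identity, I would verify it only for $F\in C_c^\infty$. The boundary terms vanish because we integrate on $\R^n$ against $\nu$, which has an integrable log-concave density, and $\nabla\psi$ is bounded with values in $K$. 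So the integration by parts on $\R^n$ is justified by the decay of $e^{-\psi}$.

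\textbf{Approximation (the main obstacle).} A general centered log-concave $X$ must be approximated by regular measures $\mu_k$. I would first truncate to a large ball, then convolve with a small Gaussian, multiply by $e^{-\varepsilon|x|^2}$ restricted to a smooth convex body approximating the ball, and finally recenter by a translation. Recentering preserves log-concavity and regularity, and the translations tend to $0$. Log-concave measures have uniformly bounded moments of all orders along such a sequence, since their exponential tails are controlled by their covariances. Hence $\Var(|X_k|^2)\to\Var(|X|^2)$ and $\Sigma_k\to\Sigma$, and the inequality passes to the limit. If $X$ is degenerate, i.e.\ supported on a lower-dimensional affine subspace, I would work within that subspace, where $\Tr(\Sigma^2)$ is unchanged.

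\textbf{Optimality of the constant 8.} I would take $X$ with i.i.d.\ coordinates $X_i=E_i-1$, where $E_i$ are standard exponentials. Then $\Sigma=\Id$ and $\Var(|X|^2)=n\Var(X_1^2)$. Since $\E X_1^2=1$ and $\E X_1^4=9$ (the fourth central moment of an exponential), we get $\Var(X_1^2)=8$. So equality holds in every dimension.
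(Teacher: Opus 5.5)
Your proposal is correct and follows the paper's proof. Under the regularity assumptions you apply Proposition~\ref{prop_594} to $\tauM$ and then Proposition~\ref{prop_824}, you remove the regularity by approximation (sketched in more detail than the paper, which defers to Chen--Klartag), and you show sharpness with centered exponentials; the only minor difference is that you take $\Sigma=\Id$, whereas the paper places independent centered exponentials with the appropriate variances along an eigenbasis of $\Sigma$ to get equality for every covariance matrix, and either choice establishes that the constant $8$ is optimal.
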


\begin{proof}
Under the regularity assumptions, Proposition~\ref{prop_594}
applied to the Stein kernel $\tauM$ and
Proposition~\ref{prop_824} give
\[
  \Var(|X|^2)
  \leq4\int\norm{\tauM}_{\HS}^2\dd\mu
  \leq8\Tr(\Sigma^2).
\]
As explained in~\cite[Section~3]{ChenKlartag}, the regularity
assumptions may be removed by approximation. Sharpness follows by
considering independent centered exponential variables with the
appropriate variances in a basis diagonalizing $\Sigma$.
\end{proof}

We next pass from~\eqref{eq_862} to quadratic forms.

\begin{corollary}\label{cor_885}
Let $X$ be an isotropic log-concave random vector in $\R^n$. If
$A\succeq0$ is symmetric, then
\begin{equation}\label{eq_888}
  \Var\bigl(\langle AX,X\rangle\bigr)
  \leq8\Tr(A^2).
\end{equation}
For an arbitrary symmetric matrix $A$,
\begin{equation}\label{eq_893}
  \Var\bigl(\langle AX,X\rangle\bigr)
  \leq16\Tr(A^2).
\end{equation}
\end{corollary}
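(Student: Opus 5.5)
For $A\succeq0$ we will reduce to Theorem~\ref{thm_859} by a linear change of variables. Put $Y=A^{1/2}X$. Then $Y$ is a centered log-concave random vector, since log-concavity is preserved under linear maps; if $A$ is singular, $Y$ is supported on a subspace, and we apply Theorem~\ref{thm_859} there or approximate $A$ by $A+\varepsilon\Id$. Its covariance is $\Sigma=A^{1/2}\Id A^{1/2}=A$. Moreover $|Y|^2=\langle A^{1/2}X,A^{1/2}X\rangle=\langle AX,X\rangle$. Theorem~\ref{thm_859} then gives
\[
  \Var\bigl(\langle AX,X\rangle\bigr)=\Var(|Y|^2)\leq8\Tr(A^2),
\]
which is \eqref{eq_888}. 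The only care point is the degenerate case. If $A$ has rank $k<n$, then $Y$ lives in the range of $A$ and is log-concave there with covariance $A$ restricted to that range, so Theorem~\ref{thm_859} applies in dimension $k$. Alternatively, we apply the bound to $A_\varepsilon=A+\varepsilon\Id$ and let $\varepsilon\to0$; both sides are continuous in $\varepsilon$ because $X$ has finite fourth moments.

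For a general symmetric $A$, diagonalize and write $A=A_+-A_-$ with $A_\pm\succeq0$ and $A_+A_-=0$. Then $\Tr(A^2)=\Tr(A_+^2)+\Tr(A_-^2)$. Using $\Var(U-V)\leq2\Var(U)+2\Var(V)$ together with the positive case,
\[
  \Var\bigl(\langle AX,X\rangle\bigr)\leq 2\cdot8\Tr(A_+^2)+2\cdot8\Tr(A_-^2)=16\Tr(A^2),
\]
which is \eqref{eq_893}.

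There is no real obstacle. The one point to verify is the degenerate or approximation step, since Theorem~\ref{thm_859} is stated for log-concave vectors that may lack a density in $\R^n$. Since the theorem is stated for general centered log-concave vectors, we read it as covering lower-dimensional support. The $\varepsilon$-perturbation also avoids the issue entirely.
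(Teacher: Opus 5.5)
Your proposal is correct and follows the paper's proof. You apply Theorem~\ref{thm_859} to $Y=A^{1/2}X$ (covariance $A$, $|Y|^2=\langle AX,X\rangle$), then split $A=A_+-A_-$ and use $\Var(U-V)\leq2\Var(U)+2\Var(V)$. Your $\varepsilon$-perturbation for singular $A$ is a reasonable way to handle a detail the paper leaves implicit: $A^{1/2}X$ then has no density in $\R^n$, which the paper's definition of log-concavity requires.
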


\begin{proof}
For the first statement, apply Theorem~\ref{thm_859}
to $Y=A^{1/2}X$. For a general symmetric matrix, split it into positive and negative parts
$A=A_+-A_-$, where $A_+,A_-\succeq0$ and $A_+A_-=0$ and use 
\eqref{eq_888}.
\begin{align*}
  \Var\bigl(\langle AX,X\rangle\bigr)
  &\leq16\Tr(A_+^2)+16\Tr(A_-^2)\\
  &=16\Tr(A^2).
\end{align*}
\end{proof}

We now return to the third-moment parameter $\kappa_n$ defined
in~\eqref{eq_381}.

\begin{corollary}\label{cor_915}
One has
\begin{equation}\label{eq_917}
  \kappa_n^2\leq16.
\end{equation}
\end{corollary}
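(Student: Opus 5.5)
Fix an isotropic log-concave $X$ and a unit vector $\theta$, and put $M=\E\bigl[\langle X,\theta\rangle X\otimes X\bigr]$, a symmetric $n\times n$ matrix. The plan is to compute $\norm{M}_{\HS}$ by duality and then apply Corollary~\ref{cor_885}. For any symmetric $A$ we have $\ip{M}{A}_{\HS}=\E\bigl[\langle X,\theta\rangle\langle AX,X\rangle\bigr]$. Since $M$ is itself symmetric, it suffices to take the supremum of this quantity over symmetric $A$ with $\Tr(A^2)=1$, and that supremum equals $\norm{M}_{\HS}$.

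The key step is to center the quadratic form. Because $X$ is centered, $\E\langle X,\theta\rangle=0$, so
\[
  \E\bigl[\langle X,\theta\rangle\langle AX,X\rangle\bigr]
  =\E\bigl[\langle X,\theta\rangle\bigl(\langle AX,X\rangle-\E\langle AX,X\rangle\bigr)\bigr].
\]
By Cauchy--Schwarz together with $\E\langle X,\theta\rangle^2=|\theta|^2=1$ (isotropy), this is at most $\sqrt{\Var(\langle AX,X\rangle)}$. By the general-symmetric case~\eqref{eq_893} of Corollary~\ref{cor_885}, this is in turn at most $\sqrt{16\Tr(A^2)}=4$.

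Taking the supremum over $A$ gives $\norm{M}_{\HS}\leq4$, and then the supremum over $X$ and $\theta$ yields $\kappa_n^2\leq16$. There is no real obstacle: the only points to watch are that the duality may be restricted to symmetric $A$ because $M$ is symmetric, and that centering is what lets the variance bound apply.
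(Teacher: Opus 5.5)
Your proof is correct and is essentially the paper's argument, just phrased through duality. The paper tests directly against your matrix $M$ itself (called $B$ there), i.e.\ the maximizer of your supremum up to normalization. It obtains $\norm{B}_{\HS}^2=\E\bigl[\langle X,\theta\rangle\langle BX,X\rangle\bigr]\leq\Var\bigl(\langle BX,X\rangle\bigr)^{1/2}\leq4\norm{B}_{\HS}$ by the same centering, Cauchy--Schwarz and \eqref{eq_893} steps.
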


\begin{proof}
Let $X$ be isotropic and log-concave, let
$\theta\in\mathbb S^{n-1}$, and set
\[
  B=\E\bigl[\langle X,\theta\rangle X\otimes X\bigr].
\]
Then $B$ is symmetric and
\begin{align*}
  \norm{B}_{\HS}^2
  &=\E\left(\langle X,\theta\rangle.
               \langle BX,X\rangle\right)\\
  &\leq \Var\bigl(\langle BX,X\rangle\bigr)^{1/2}
  \leq4\norm{B}_{\HS},
\end{align*}
where we used isotropy and~\eqref{eq_893}. Hence
$\norm{B}_{\HS}\leq4$, and the result follows.
\end{proof}

\section{Mean-width bound and applications}\label{sec_940}

In this section we prove the main result of the paper. We first prove the
geometric inequality of Theorem~\ref{thm_230} using heat flow and combine it with the
gauge-order comparison of Theorem~\ref{thm_206} to deduce
Theorem~\ref{thm_186}. We then establish the isotropic and affine optimality statements from the introduction, before recording the
consequences for Banach-Mazur distances and the flatness constant.

Before anything, we recall the very standard fact that spherical integration may be replaced by Gaussian integrations in the definition of the mean-norm and mean-width. Indeed if $G$ is a standard Gaussian vector in $\R^n$, then $G = |G|\Theta$, where $\Theta$ is uniformly distributed on the sphere, $|G|$ is the Euclidean norm of $G$ and both quantities are independent. Therefore, by homogeneity, for any $K$,
\begin{equation}\label{eq_949}
  \E\norm{G}_K=\E|G|\,M(K) \simeq \sqrt{n}M(K),
  \qquad
  \E h_K(G)=\E|G|\,M^*(K) \simeq \sqrt{n}M^*(K),
\end{equation}
where we used that $\E|G|\simeq\sqrt n.$

\subsection{A heat-flow argument}

The main new ingredient for the mean-width estimate is
Theorem~\ref{thm_230}, whose proof we now give.

\begin{proof}[Proof of Theorem~\ref{thm_230}]
By a standard approximation argument, we may assume that $K$ has a smooth boundary. Let $X_K$ be uniformly distributed in the centered
convex body $K$, and write
\[
  W_K=\E h_K(X_K),
  \qquad
  W_G=\E h_K(G).
\]
Let $(B_s)_{s\geq0}$ be a standard Brownian motion, independent of
$X_K$, and set
\[
  F(s)=\E h_K(X_K+B_s)
  =\frac{1}{|K|}\int_K P_s h_K(x)\dd x,
\]
where $P_s$ is the heat semigroup with generator $\Delta/2$. By the
divergence theorem,
\[
  F'(s)
  =\frac{1}{2|K|}
  \int_{\partial K}
  \ip{\nabla P_s h_K(x)}{n_K(x)}\dd S(x),
\]
where $n_K$ denotes the outer unit normal and $\dd S$ the surface measure.
The support function is differentiable almost everywhere and
$\nabla h_K(y)\in K$ at every differentiability point. Hence, by convexity of $K$, the average
\[
  \nabla P_s h_K=P_s(\nabla h_K)
\]
also takes values in $K$. For $x\in\partial K$,
\[
  \ip{\nabla P_s h_K(x)}{n_K(x)}
  \leq h_K(n_K(x))
  =\ip{x}{n_K(x)}.
\]
A second application of the divergence theorem gives
\[
  F'(s)
  \leq\frac{1}{2|K|}
  \int_{\partial K}\ip{x}{n_K(x)}\dd S(x)
  =\frac n2.
\]
Consequently,
\[
  F(s)\leq W_K+\frac{ns}{2}.
\]
Choosing $s_*=2W_K/n$, we find that $F(s_*)\leq2W_K$. On the other hand, since $X_K$ is centered, Jensen's inequality yields
\[
  F(s_*)
  =\E h_K(X_K+\sqrt{s_*}G)
  \geq\E h_K(\sqrt{s_*}G)
  =\sqrt{s_*}\,W_G.
\]
Therefore, combining both inequalities yields
\[
  W_G^2\leq\frac{4W_K^2}{s_*}=2nW_K,
\]
and the proof is complete.
\end{proof}

\subsection{Proof of Theorem~\ref{thm_186}}

We now deduce Theorem~\ref{thm_186}. Let $K$ be isotropic. Since
$h_K=\norm{\cdot}_{K^\circ}$ is a gauge,
Theorem~\ref{thm_206} gives
\[
  \E h_K(X_K)
  \leq C\sqrt{\log n}\,\E h_K(G).
\]
Combining this estimate with Theorem~\ref{thm_230}, we obtain
\[
  \bigl(\E h_K(G)\bigr)^2
  \leq Cn\sqrt{\log n}\,\E h_K(G),
\]
and hence
\[
  \E h_K(G)\leq Cn\sqrt{\log n}.
\]
Using~\eqref{eq_949}, we conclude that
\[
  M^*(K)\leq C\sqrt{n\log n},
\]
as claimed.

\subsection{Optimality of the estimates}\label{sec_1045}

We now prove the optimality statements made in the introduction. We first
consider the isotropic position and then pass to arbitrary affine positions.

\paragraph{Sharpness in isotropic position.}
Consider the convex bodies
\[
  Q_n=\sqrt3\,B_\infty^n,
  \qquad
  P_n=\sqrt{\frac{(n+1)(n+2)}2}\,B_1^n.
\]
The bodies $Q_n$ and $P_n$ are respectively the isotropic cube and the
isotropic cross-polytope. Since
$\E\norm{G}_\infty\simeq\sqrt{\log n}$, formula
\eqref{eq_949} gives
\begin{equation}\label{eq_1061}
  M(Q_n)
  \simeq\sqrt{\frac{\log n}{n}},
  \qquad
  M^*(P_n)
  \simeq\sqrt{n\log n}.
\end{equation}
Thus the mean-norm and mean-width estimates are separately sharp in
isotropic position, even among origin-symmetric bodies.

The product estimate is also sharp in the symmetric isotropic class. For
simplicity, let the ambient dimension be $2m$ and consider
\[
  K=Q_m\times P_m.
\]
The body $K$ is origin-symmetric and isotropic. Let
$G=(G_1,G_2)$, where $G_1,G_2$ are independent standard Gaussian vectors
in $\R^m$. We have
\[
  \norm{(x,y)}_K
  =\max\{\norm{x}_{Q_m},\norm{y}_{P_m}\},
  \qquad
  h_K(x,y)=h_{Q_m}(x)+h_{P_m}(y).
\]
Moreover,
\[
  \E\norm{G_1}_{Q_m}\simeq\sqrt{\log m},
  \qquad
  \E\norm{G_2}_{P_m}\simeq1,
\]
and
\[
  \E h_{Q_m}(G_1)\simeq m,
  \qquad
  \E h_{P_m}(G_2)\simeq m\sqrt{\log m}.
\]
Therefore,
\[
  M(K)\simeq\sqrt{\frac{\log m}{m}},
  \qquad
  M^*(K)\simeq\sqrt{m\log m},
\]
and hence
\begin{equation}\label{eq_1104}
  \ell(K)\simeq\log m.
\end{equation}
This proves the sharpness of the isotropic $MM^*$ estimate, even in the
origin-symmetric class.

This example is nevertheless compatible with
Conjecture~\ref{conj_358}. Indeed, for the linear image
\[
  \widetilde K
  =Q_m\times\frac1{\sqrt{\log m}}P_m,
\]
the same computation gives
\[
  M(\widetilde K)\simeq\sqrt{\frac{\log m}{m}},
  \qquad
  M^*(\widetilde K)\simeq\sqrt m,
  \qquad
  \ell(\widetilde K)\simeq\sqrt{\log m}.
\]

\paragraph{Minimal mean-width position.}
We shall use a characterization of the minimal mean-width position due to Giannopoulos and Milman~\cite{GiannopoulosMilman}.
Let $\sigma$ denote the rotationally invariant probability measure on
$\mathbb S^{n-1}$. We say that a convex body $K$ is in minimal mean-width position if
\[
  M^*(K)\leq M^*(TK)
  \qquad\text{for every }T\in SL_n.
\]
Giannopoulos and Milman proved that this is equivalent to the measure $h_K\dd\sigma$ having a scalar covariance matrix,
\begin{equation}\label{eq_1135}
  \int_{\mathbb S^{n-1}}h_K(u)\,u\otimes u\dd\sigma(u)
  =\frac{M^*(K)}{n}\,\Id.
\end{equation}

If $K$ is $1$-symmetric in some basis, that is, invariant under coordinate permutations and sign changes, so is the measure $h_K\dd\sigma$, therefore its covariance matrix is scalar and $K$ is in minimal mean-width position.

The same conclusion applies to the mean-norm $M$ : If $K$ is $1$-symmetric then it is in minimal mean-norm position. Indeed, by what precedes, its polar $K^\circ$, which inherits the same symmetries, has minimal mean width. Now for any $T\in SL_n$,
\[
  M(TK)=M^*((TK)^\circ)
  =M^*((T^{-1})^TK^\circ)
  \geq M^*(K^\circ)=M(K).
\]
It follows that the standard positions of the cube and the cross-polytope
simultaneously minimize $M$ and $M^*$ over volume-preserving linear images.

When working in the symmetric class, it is enough to consider linear images of $K$, rather than affine ones. Indeed, while the mean-width is invariant by translation, the mean-norm can only increase if an origin-symmetric body is translated. For completeness, we give a short proof of this elementary fact.

For a convex body $L$ containing the origin, write
\[
  \rho_L(u)=\max\{t\geq0:\ tu\in L\},
\]
so that $\norm{u}_L=1/\rho_L(u)$. Let $K=-K$ and assume that
$0\in\operatorname{int}(K+b)$. Put
$r_+=\rho_{K+b}(u)$ and $r_-=\rho_{K+b}(-u)$. Then
$r_+u-b\in K$, while symmetry gives $r_-u+b\in K$. By convexity,
\[
  \frac{r_++r_-}{2}u\in K,
\]
and therefore $r_++r_-\leq2\rho_K(u)$. Hence
\[
  \frac12\bigl(\norm{u}_{K+b}+\norm{-u}_{K+b}\bigr)
  =\frac12\left(\frac1{r_+}+\frac1{r_-}\right)
  \geq\frac2{r_++r_-}
  \geq\norm{u}_K.
\]
Integrating over the sphere yields
\begin{equation}\label{eq_1172}
  M(K+b)\geq M(K).
\end{equation}

\paragraph{Affine optimality of $M$ and $M^*$.}
We now prove Corollary~\ref{cor_319}. Let $K$ be a convex body
with $|K|=|B_2^n|$ and let $\widetilde K = TK + a$, $T\in SL_n$ be an affine image of $K$ such that $\widetilde K$ is centered and
$$\Cov(\widetilde K) = \frac{1}{|\widetilde K|}\int_{\widetilde K}x\otimes x \,\dd x = \lambda_K^{-2} I_n$$
for some $\lambda_K>0$. The solution of the slicing
problem~\cite{KlartagLehecSlicing, BizeulSlicing, BrazitikosSlicing} says exactly that 
$$\lambda_K \simeq \sqrt{n}.$$
Applying Theorems~\ref{thm_252} and~\ref{thm_186} to the isotropic convex body $\lambda_K \widetilde K$ , we obtain
\[
  M(\widetilde K)\lesssim\sqrt{\log n},
  \qquad
  M^*(\widetilde K)\lesssim\sqrt{\log n}.
\]
This proves the upper bounds in~\eqref{eq_321}.

For the reverse direction, set $\widetilde P_n$ and $\widetilde Q_n$ to be scaled versions of $Q_n$ and $P_n$ so that their volumes
are equal to $|B_2^n|$. The scaling factors are of order $n^{-1/2}$, so~\eqref{eq_1061} becomes
\[
  M(\widetilde Q_n)\simeq\sqrt{\log n},
  \qquad
  M^*(\widetilde P_n)\simeq\sqrt{\log n}.
\]
By the preceding paragraph, $\widetilde Q_n$ and $\widetilde P_n$ are in minimal mean-norm and mean-width position. This proves the corollary, including
its restriction to origin-symmetric convex bodies.

\begin{remark}
    In the affine optimality argument above, the resolution of the slicing problem is only needed to show that the $M^*$ bound in isotropic position yield an affine optimal bound. For the $M$ bound, we only need the "easy" direction of the slicing problem, namely that any convex body in isotropic position has a volume radius at most $\sqrt{n}$, which can be proved in an elementary fashion. 

    Therefore, as discussed in the introduction, one may dualize the $M$ bound to prove get an affine optimal $M^*$ bound when $K$ is in "dual isotropic position", that is when $K^\circ$ is isotropic. This latter argument does not require a solution to the slicing problem.
\end{remark}

\paragraph{The affine $MM^*$ problem.}
We finally prove Corollary~\ref{cor_340}. The upper bound in
\eqref{eq_342} follows by placing an arbitrary convex body in
isotropic position and applying Corollary~\ref{cor_265}.

The reverse direction was settled by Banaszczyk, Litvak, Pajor and
Szarek  \cite{BanaszczykLitvakPajorSzarek}, who proved that for any simplex $\Delta$ containing the origin in its interior,
\[
  \ell(\Delta)\gtrsim\log n.
\]
Together with the upper bound, this proves
\eqref{eq_342}.

Finally, the conjectural $\sqrt{\log n}$ order in the origin-symmetric class is itself
optimal. Indeed, as previously discussed, the standard position of the cube $B_\infty^n$ (or equivalently, the cross polytope) simultaneously
minimizes $M$ and $M^*$ over $SL_n$. Since $\ell$ is scale invariant, this
implies
\[
  \inf_{T\in GL_n}\ell(TB_\infty^n)
  =\ell(B_\infty^n)
  \simeq\sqrt{\log n}.
\]
On the other hand,~\eqref{eq_1104} shows that the
isotropic position may have an $MM^*$ product of order $\log n$, even in the
origin-symmetric class. Therefore, a proof of Conjecture \ref{conj_358} should not use the isotropic position.

\subsection{Applications of the MM* estimate}

We conclude with two consequences of the bounds on $M$ and $M^*$.
Recall that the Banach-Mazur distance between two convex bodies
$K,L\subset\R^n$ is defined by
\[
  d_{BM}(K,L)
  =\inf\left\{\lambda\geq1:\ \exists T\in GL_n,\ x,y\in\R^n,
  \ TK+x\subset L+y\subset\lambda(TK+x)\right\}.
\]
The argument of~\cite{BizeulKlartag} gives, after putting
$K,L\subset\R^n$ in isotropic position,
\[
  d_{BM}(K,L)
  \leq C\Bigl(
  n\max\{M(K),M(L)\}
  +\max\{M^*(K),M^*(L)\}
  \Bigr)^2
\]
for all $n\geq2$. Together with~\eqref{eq_188} and
\eqref{eq_254}, this yields
\begin{equation}\label{eq_1250}
  d_{BM}(K,L)\leq Cn\log n.
\end{equation}
The relevance of $MM^*$ bounds to Banach-Mazur distances was pioneered by Rudelson \cite{Rudelson}.

The same estimates also improve the flatness bound. Recall that the
flatness constant $\operatorname{Flt}(n)$ is the smallest $F$ such
that every convex body $K\subset\R^n$ satisfying
$K\cap\Z^n=\varnothing$ has lattice width at most $F$, that is,
\[
  \min_{z\in\Z^n\setminus\{0\}}
  \left(
  \sup_{x\in K}\langle x,z\rangle
  -\inf_{x\in K}\langle x,z\rangle
  \right)
  \leq F.
\]
It follows from the $MM^*$ estimate and the argument of Banaszczyk,
Litvak, Pajor and Szarek~\cite{BanaszczykLitvakPajorSzarek} that
\begin{equation}\label{eq_1268}
  \operatorname{Flt}(n)\leq Cn\log n.
\end{equation}
This improves the bound $O(n\log^2 n)$ obtained by Reis and
Rothvoss~\cite{ReisRothvoss}.


\begin{thebibliography}{99}

\bibitem{AlonsoBastero}
D.~Alonso-Guti\'errez and J.~Bastero,
\emph{The variance conjecture on some polytopes},
in Asymptotic Geometric Analysis, Fields Inst. Commun. 68,
Springer, New York, 2013, 1-20.

\bibitem{AnttilaBallPerissinaki}
M.~Anttila, K.~Ball, and I.~Perissinaki,
\emph{The central limit problem for convex bodies},
Trans. Amer. Math. Soc. \textbf{355} (2003), no.~12, 4723-4735.

\bibitem{ArtsteinAvidanGiannopoulosMilman}
S.~Artstein-Avidan, A.~Giannopoulos, and V.~D.~Milman,
\emph{Asymptotic Geometric Analysis, Part I},
Mathematical Surveys and Monographs, Vol.~202,
American Mathematical Society, Providence, RI, 2015.

% \bibitem{AubrunSzarek}
% G.~Aubrun and S.~J.~Szarek,
% \emph{Alice and Bob Meet Banach: The Interface of Asymptotic Geometric Analysis and Quantum Information Theory},
% Mathematical Surveys and Monographs, Vol.~223,
% American Mathematical Society, Providence, RI, 2017.

\bibitem{BanaszczykLitvakPajorSzarek}
W.~Banaszczyk, A.~E.~Litvak, A.~Pajor, and S.~J.~Szarek,
\emph{The flatness theorem for nonsymmetric convex bodies via the local theory of Banach spaces},
Math. Oper. Res. \textbf{24} (1999), no.~3, 728-750.

\bibitem{BartheKlartag}
F.~Barthe and B.~Klartag,
\emph{Spectral gaps, symmetries and log-concave perturbations},
Bull. Hellenic Math. Soc. \textbf{64} (2020), 1-31.

\bibitem{BizeulSlicing}
P.~Bizeul,
\emph{The slicing conjecture via small ball estimates},
arXiv:2501.06854, 2025.

\bibitem{BizeulKlartag}
P.~Bizeul and B.~Klartag,
\emph{Distances between non-symmetric convex bodies: optimal bounds up to polylog},
arXiv:2510.20511v3, 2026.

\bibitem{BobkovKoldobsky}
S.~G.~Bobkov and A.~Koldobsky,
\emph{On the central limit property of convex bodies},
in Geometric Aspects of Functional Analysis (2001-02),
Lecture Notes in Math., Vol.~1807, Springer, 2003, 44-52.

\bibitem{BobkovMPosition}

S.~G.~Bobkov,
\emph{On Milman's ellipsoids and $M$-position of convex bodies},
in Concentration, Functional Inequalities and Isoperimetry,
Contemp. Math. \textbf{545}, Amer. Math. Soc., Providence, RI, 2011,
23--33.

\bibitem{BourgainMilman}
J.~Bourgain and V.~D.~Milman,
\emph{New volume ratio properties for convex symmetric bodies in
$\mathbb R^n$},
Invent. Math. \textbf{88} (1987), no.~2, 319--340.


\bibitem{BrascampLieb}
H.~J.~Brascamp and E.~H.~Lieb,
\emph{On extensions of the Brunn-Minkowski and Pr\'ekopa-Leindler theorems, including inequalities for log-concave functions},
J. Funct. Anal. \textbf{22} (1976), no.~4, 366-389.

\bibitem{BrazitikosSlicing}
S.~Brazitikos,
\newblock A deterministic proof of the slicing theorem,
\newblock \emph{arXiv preprint arXiv:2608.14342}, 2026.

\bibitem{BrazitikosGiannopoulosValettasVritsiou}
S.~Brazitikos, A.~Giannopoulos, P.~Valettas, and B.-H.~Vritsiou,
\emph{Geometry of Isotropic Convex Bodies},
Mathematical Surveys and Monographs, Vol.~196,
American Mathematical Society, Providence, RI, 2014.

\bibitem{BrazitikosPandis}
S.~Brazitikos and C.~Pandis,
\emph{Geometric bounds for the mean gauge and the mean width in
isotropic position},
arXiv:2608.07216, 2026.


\bibitem{ChenKlartag}
Y.~Chen and B.~Klartag,
\emph{Digesting the proof of the sharp thin-shell inequality},
arXiv:2607.23307, 2026.

\bibitem{CorderoKlartag}
D.~Cordero-Erausquin and B.~Klartag,
\emph{Moment measures},
J. Funct. Anal. \textbf{268} (2015), no.~12, 3834-3866.

\bibitem{CourtadeFathiPananjady}
T.~A.~Courtade, M.~Fathi, and A.~Pananjady,
\emph{Existence of Stein kernels under a spectral gap, and discrepancy bounds},
Ann. Inst. Henri Poincar\'e Probab. Stat. \textbf{55} (2019), no.~2, 777-790.

\bibitem{Eldan}
R.~Eldan,
\emph{Thin shell implies spectral gap up to polylog via a stochastic localization scheme},
Geom. Funct. Anal. \textbf{23} (2013), no.~2, 532-569.

\bibitem{EldanLehec}
R.~Eldan and J.~Lehec,
\emph{Bounding the norm of a log-concave vector via thin-shell estimates},
in Geometric Aspects of Functional Analysis: Israel Seminar (GAFA) 2011-2013,
Springer, 2014, 107-122.

\bibitem{Fathi}
M.~Fathi,
\emph{Stein kernels and moment maps},
Ann. Probab. \textbf{47} (2019), no.~4, 2172-2185.

\bibitem{GiannopoulosMilman}
A.~A.~Giannopoulos and V.~D.~Milman,
\emph{Extremal problems and isotropic positions of convex bodies},
Israel J. Math. \textbf{117} (2000), 29-60.

\bibitem{GiannopoulosMilman2}
A.~Giannopoulos and E.~Milman,
\emph{$M$-estimates for isotropic convex bodies and their
$L_q$-centroid bodies},
in \emph{Geometric Aspects of Functional Analysis},
Lecture Notes in Math. \textbf{2116},
Springer, Cham, 2014, 159--182.

\bibitem{GiannopoulosStavrakakisTsolomitisVritsiou}
A.~Giannopoulos, P.~Stavrakakis, A.~Tsolomitis, and B.-H.~Vritsiou,
\emph{Geometry of the $L_q$-centroid bodies of an isotropic
log-concave measure},
Trans. Amer. Math. Soc. \textbf{367} (2015), no.~7, 4569--4593.

\bibitem{Guan}
Q.~Guan,
\emph{A note on Bourgain's slicing problem},
arXiv:2412.09075, 2024.

\bibitem{KannanLovaszSimonovits}
R.~Kannan, L.~Lov\'asz, and M.~Simonovits,
\emph{Isoperimetric problems for convex bodies and a localization lemma},
Discrete Comput. Geom. \textbf{13} (1995), 541-559.

\bibitem{KlartagHminus}
B.~Klartag,
\emph{A Berry-Esseen type inequality for convex bodies with an unconditional basis},
Probab. Theory Related Fields \textbf{145} (2009), no.~1-2, 1-33.

\bibitem{KlartagKLS}
B.~Klartag,
\emph{Logarithmic bounds for isoperimetry and slices of convex sets},
Ars Inveniendi Analytica, Paper No.~4 (2023), 1-17.

\bibitem{KlartagMoment}
B.~Klartag,
\emph{Logarithmically-concave moment measures I},
in Geometric Aspects of Functional Analysis,
Lecture Notes in Math., Vol.~2116, Springer, Cham, 2014, 231-260.

\bibitem{KlartagLehecSlicing}
B.~Klartag and J.~Lehec,
\emph{Affirmative resolution of Bourgain's slicing problem using Guan's bound},
Geom. Funct. Anal. \textbf{35} (2025), 1147-1168.

\bibitem{KlartagLehec}
B.~Klartag and J.~Lehec,
\emph{Thin-shell bounds via parallel coupling},
arXiv:2507.15495v2, 2026.

\bibitem{KlartagLehecPolylog}
B.~Klartag and J.~Lehec,
\emph{Bourgain's slicing problem and KLS isoperimetry up to polylog},
Geom. Funct. Anal. \textbf{32} (2022), no.~5, 1134-1159.

\bibitem{KlartagLehecSurvey}
B.~Klartag and J.~Lehec,
\emph{Isoperimetric inequalities in high-dimensional convex sets},
Bull. Amer. Math. Soc. \textbf{62} (2025), 575-642.

\bibitem{HartzoulakiThesis}
M.~Hartzoulaki,
\emph{Probabilistic methods in the theory of convex bodies},
Ph.D. thesis, University of Crete, March 2003.

\bibitem{LeeVempala}
Y.~T.~Lee and S.~Vempala,
\emph{Eldan's stochastic localization and the KLS conjecture: isoperimetry, concentration and mixing},
Ann. of Math. (2) \textbf{199} (2024), no.~3, 1043-1092.

\bibitem{Letwin}
B.~Letwin,
\emph{The KLS constant is $O(\log^{1/4}n)$},
arXiv:2607.24164, 2026.

\bibitem{Milman}
E.~Milman,
\emph{On the mean-width of isotropic convex bodies and their associated $L_p$-centroid bodies},
Int. Math. Res. Not. IMRN (2015), no.~11, 3408-3423.

\bibitem{PaourisConcentration}
G.~Paouris,
\emph{Concentration of mass on convex bodies},
Geom. Funct. Anal. \textbf{16} (2006), no.~5, 1021--1049.

\bibitem{PaourisPathak}
G.~Paouris and R.~Pathak,
\emph{Optimal mean width and metric entropy estimates for convex
bodies},
arXiv:2607.29522, 2026.

\bibitem{Pisier}
G.~Pisier,
\emph{The Volume of Convex Bodies and Banach Space Geometry},
Cambridge Tracts in Mathematics, Vol.~94,
Cambridge University Press, Cambridge, 1989.

\bibitem{PivovarovMeanWidth}
P.~Pivovarov,
\emph{On the volume of caps and bounding the mean-width of an
isotropic convex body},
Math. Proc. Cambridge Philos. Soc. \textbf{149} (2010), no.~2,
317--331.

\bibitem{ReisRothvoss}
V.~Reis and T.~Rothvoss,
\emph{The subspace flatness conjecture and faster integer programming},
arXiv:2303.14605v5, 2026.

\bibitem{Rudelson}
M.~Rudelson,
\emph{Distances between non-symmetric convex bodies and the $MM^*$-estimate},
Positivity \textbf{4} (2000), no.~2, 161-178.

\bibitem{VritsiouRegularEllipsoids}
B.-H.~Vritsiou,
\emph{Regular ellipsoids and a Blaschke--Santal\'o-type inequality
for projections of non-symmetric convex bodies},
J. Funct. Anal. \textbf{286} (2024), no.~11,
Paper No.~110414, 40~pp.

\end{thebibliography}
\end{document}